\documentclass[11pt]{article}%
\usepackage{amsfonts}
\usepackage{amsmath}
\usepackage{geometry}
\usepackage{amssymb}
\usepackage{graphicx}%
\setcounter{MaxMatrixCols}{30}
\providecommand{\U}[1]{\protect\rule{.1in}{.1in}}

\newtheorem{theorem}{Theorem}

\newtheorem{definition}[theorem]{Definition}

\newtheorem{lemma}[theorem]{Lemma}

\newtheorem{proposition}[theorem]{Proposition}
\newtheorem{remark}[theorem]{Remark}

\newenvironment{proof}[1][Proof]{\noindent\textbf{#1.} }{\ \rule{0.5em}{0.5em}}
\begin{document}

\title{ Well-posedness of the vector advection equations by stochastic perturbation. }
\author{Franco Flandoli\thanks{Dipartimento di Matematica, Universit\`{a} di Pisa,
Italy, E-mail: \textsl{flandoli@dma.unipi.it}}, Christian
Olivera\thanks{Departamento de Matem\'{a}tica, Universidade Estadual de
Campinas, Brazil. E-mail: \textsl{colivera@ime.unicamp.br}. }}
\date{}
\maketitle

\begin{abstract}
A linear stochastic vector advection equation is considered. The equation may
model a passive magnetic field in a random fluid. The driving velocity field
is a integrable to a certain power and the noise is infinite dimensional. We
prove that, thanks to the noise, the equation is well posed in a suitable
sense, opposite to what may happen without noise.

\end{abstract}

\noindent\textbf{Keywords}\textit{\textbf{:} }Stochastic vector advection
equations, Cauchy problem, multiplicative noise, non-regular coefficients,
regularization by noise, stochastic flows, infinite dimensional noise

\noindent\textbf{MSC Subject Classification:} 60H15, 35Q35, 76D03.

\section{Introduction}

\label{Intro}

Consider the linear stochastic vector advection equation in the unknown random
field $B:\Omega\times\lbrack0,T]\times\mathbb{R}^{d}\rightarrow\mathbb{R}^{d}$%

\begin{equation}
dB+\left(  v\cdot\nabla B-B\cdot\nabla v\right)  \ dt+\sum_{k=1}^{\infty
}\left(  \sigma_{k}\cdot\nabla B-B\cdot\nabla\sigma_{k}\right)  \circ
dW_{t}^{k}=0 \label{Adve}%
\end{equation}
where $v:[0,T]\times\mathbb{R}^{d}\rightarrow\mathbb{R}^{d}$ and $\sigma
_{k}:\mathbb{R}^{d}\rightarrow\mathbb{R}^{d}$, $k\in\mathbb{N}$, are given
divergence free vector fields and $\left(  W_{\cdot}^{k}\right)
_{k\in\mathbb{N}}$ is a family of independent real-valued Brownian motions on
the filtered probability space $\left(  \Omega,\mathcal{F},\mathcal{F}%
_{t},P\right)  $. We write the problem in a generic dimension $d\geq1$ but our
investigation is strongly motivated by the case $d=3$. The stochastic
integration is to be understood in the Stratonovich sense. This equation may
model a passive vector field $B$, like a magnetic field, in a turbulent fluid
with a non-trivial average component $v$ and random component $\sum
_{k=1}^{\infty}\sigma_{k}dW_{t}^{k}$. The general structure of the noise
assumed here is inspired by the theory of diffusion of passive scalars and
vector fields in turbulent fluids, see for instance \cite{FaGaVe} and is also
motivated by the recent proposal for a variational principle approach to fluid
mechanics, see \cite{Holm} (although the equations in \cite{Holm} are always
nonlinear, with random $v$ influenced by $B$, hence more difficult than those
studied here). Particular cases of this equation have been considered before
in \cite{FMau}, \cite{Fla}; see also \cite{Brez} and references therein; but
the generality assumed here is important from the physical viewpoint and the
proofs are new. We impose below some simplifying assumptions on the vector
fields $\sigma_{k}$.

We aim at studying existence and uniqueness, under low regularity assumption
on $v$. More precisely, we assume that%

\begin{equation}
v\in L^{\infty}\left(  [0,T],L^{p}(\mathbb{R}^{d})\right)  \text{\qquad for
some}\ p>d.\label{condition on v}%
\end{equation}
For sake of simplicity we also assume $p\geq2$. Under this condition,
existence and uniqueness is not a classical result:\ indeed, in the
deterministic case (all $\sigma_{k}=0$), it is not true, as shown in
\cite{FMau} and \cite{Fla}. Thus the result of existence and uniqueness is due
to the random perturbation. The same question was considered in \cite{FMau}
under an H\"{o}lder condition on $v$ and a partial result is given in
\cite{Fla} for $v$ having suitable integrability. But in both cases the noise
was the standard Brownian motion in $\mathbb{R}^{d}$, without a space
structure. The main novelties of the present work with respect to \cite{FMau},
\cite{Fla} are:\ i) the approach, based on the new concept of quasi-regular
solution, recently introduced in \cite{Fre2} for transport type equations,
approach which allows one to prove certain properties in a much easier
way;\ ii) the noise is much more general and in line with the physical and
geometrical literature, \cite{FaGaVe}, \cite{Holm}; iii) the proof for $v$
with only integrability properties (instead of H\"{o}lder continuity) is here
complete, w.r.t. \cite{Fla} which gave only general arguments, also due to the
more synthetic approach used here. The main restriction, compared to other
works on this subject, is the notion of uniqueness used here: it is uniqueness
in the class of processes adapted to the filtration generated by the Brownian
motions. This is more restrictive than pathwise uniqueness, see Remark
\ref{remark unique}.

About condition (\ref{condition on v}), let us add some historical remarks.
Our source of inspiration is the paper \cite{Krylov}, where the authors proved
existence and uniqueness of strong solutions for the SDE%

\begin{equation}
X_{s,t}(x)=x+\int_{s}^{t}v(r,X_{s,r}(x))\ dr+W_{t}-W_{s}\,.\label{itoass}%
\end{equation}
This is the equation of characteristics associated to the SPDE (\ref{Adve}) in
the particular case when the noise is just the standard Brownian motion in
$\mathbb{R}^{d}$, without a space structure. A condition similar to
(\ref{condition on v}) was also considered in \cite{Beck}, \cite{Fre1} and
\cite{NO} to study scalar problems like linear transport equations and linear
continuity equations. In fluid mechanics, in the viscous case of Navier-Stokes
equations, when such condition holds for a\ weak solution, then such solution
is unique and more regular (it is a particular case of the so called
Lady\v{z}enskaja-Prodi-Serrin condition). Here the framework is of course
different: $v$ is given, not the unknown, and the equation is inviscid; hence
a true comparison is not possible. We only stress some parallelism between
these theories.

In Section \ref{sect prel def res}, after some necessary preliminaries which
include the It\^{o} formulation of equation (\ref{Adve}), the concept of
stochastic exponentials and the assumptions on the noise, we define the notion
of quasiregular weak solution and formulate our main existence and uniqueness
results. Then, after some technical results given in Section \ref{sect prelim}%
, we prove existence of solutions in Section \ref{section existence} and
uniqueness in Section \ref{sect unique}.

\section{Main results\label{sect prel def res}}

This section is devoted to the definition of quasiregular weak solution and to
the statement of our main results of existence and uniqueness, see Section
\ref{sect def sol}.

For this purpose, we have to introduce a few preliminaries. Section
\ref{subsect Ito} gives the It\^{o} formulation of equation (\ref{Adve}),
where the second order operator $\mathcal{L}$ given by (\ref{def L}) arises;
the definition of solution and the whole rigorous analysis is based on the
It\^{o} form. Section \ref{subsect exponent} describes the concept of
stochastic exponential used in the definition of solution and the the
filtration associated to the Brownian motions used in the statement of
uniqueness of solutions. Finally, Section \ref{subsect noise} presents the
assumptions on the noise. 

\subsection{It\^{o} formulation\label{subsect Ito}}

It is convenient to introduce the notation of the Lie bracket between vector
fields%
\[
\left[  A,B\right]  =A\cdot\nabla B-B\cdot\nabla A
\]
which is also equal to the Lie derivative $\mathcal{L}_{A}B$ and also, for
divergence free fields, to $\operatorname{curl}\left(  A\wedge B\right)  $. In
Stratonovich form equation (\ref{Adve}) then reads%
\[
dB+\left[  v,B\right]  \ dt+\sum_{k=1}^{\infty}\left[  \sigma_{k},B\right]
\circ dW_{t}^{k}=0.
\]
Its It\^{o} formulation is%
\begin{equation}
dB+\left[  v,B\right]  \ dt+\sum_{k=1}^{\infty}\left[  \sigma_{k},B\right]
\ dW_{t}^{k}=\frac{1}{2}\sum_{k=1}^{\infty}\left[  \sigma_{k},\left[
\sigma_{k},B\right]  \right]  \ dt.\label{Adve Ito}%
\end{equation}
Before we justify the claim we have to clarify that we wrote the Stratonovich
formulation above in a formal way, for the purpose of a better physical
understanding, but at the rigorous level we shall always use its It\^{o}
formulation. For this reason, we do not provide a rigorous proof of the
equivalence of the two formulations but only a formal argument. Then, in the
next section, we give a rigorous definition of solution of the It\^{o}
equation only.

Let us show that (\ref{Adve}) leads to (\ref{Adve Ito}). Recall that
Stratonovich integral differs from It\^{o} integral by 1/2 mutual variation:
$X\circ dW=XdW+\frac{1}{2}d\left\langle X,W\right\rangle $; where, in the case
of interest to us when $X$ is vector valued and $W$ is real valued, by
$\left\langle X,W\right\rangle $ we mean the vector of components
$\left\langle X^{\alpha},W\right\rangle $. Then%
\[
\left[  \sigma_{k},B\right]  \circ dW_{t}^{k}=\left[  \sigma_{k},B\right]
\ dW_{t}^{k}+d\left\langle \left[  \sigma_{k},B\right]  ,W^{k}\right\rangle
_{t}.
\]
Now
\[
d\left\langle \left[  \sigma_{k},B\right]  ,W^{k}\right\rangle _{t}=\left(
\sigma_{k}\cdot\nabla\right)  d\left\langle B,W^{k}\right\rangle
_{t}-d\left\langle B,W^{k}\right\rangle _{t}\cdot\nabla\sigma_{k}.
\]
From the equation for $dB$ and the property that the mutual variations between
$W^{k}$ and BV functions or stochastic integrals with respect to $W^{j}$ for
$j\neq k$ are zero (and $\,d\left\langle W^{k},W^{k}\right\rangle _{t}=dt$) we
get%
\[
d\left\langle B,W^{k}\right\rangle _{t}=d\left\langle \int_{0}^{\cdot}\left[
\sigma_{k},B_{s}\right]  \ dW_{s}^{k},W^{k}\right\rangle _{t}=\left[
\sigma_{k},B_{t}\right]  \ dt.
\]
Therefore we deduce (formally speaking) (\ref{Adve Ito}).

We have introduced the second order differential operator, acting on smooth
vector fields $B$, defined as%
\begin{equation}
\mathcal{L}B\left(  x\right)  :=\frac{1}{2}\sum_{k=1}^{\infty}\left[
\sigma_{k},\left[  \sigma_{k},B\right]  \right]  \left(  x\right)
.\label{def L}%
\end{equation}
We shall see in Proposition \ref{Prop ellipticity} that this operator is well
defined and strongly elliptic, under the assumptions on the noise imposed below.

\subsection{Stochastic exponentials\label{subsect exponent}}

Let $\left(  \Omega,\mathcal{F},\mathcal{F}_{t},P\right)  $ be the filtered
probability space introduced above, with the sequence $\left\{  W_{t}%
^{k}\right\}  _{k\in\mathbb{N}}$ of independent Brownian motions. Let
$\mathcal{G}_{t}$ be the associated filtration:\
\[
\mathcal{G}_{t}=\sigma\left\{  B_{s}^{k};s\in\left[  0,t\right]
,k\in\mathbb{N}\right\}  .
\]
Let $\overline{\mathcal{G}}_{t}$ be the completed filtration. For some $T>0$,
let%
\[
\mathcal{H}=L^{2}\left(  \Omega,\overline{\mathcal{G}}_{T},P\right)
\]%
\[
F=\cup_{n\in\mathbb{N}}L^{2}\left(  0,T;\mathbb{R}^{n}\right)
\]%
\[
\mathcal{D}=\left\{  e_{f}\left(  T\right)  ;f\in F\right\}
\]
where, for $n\in\mathbb{N}$, $f\in L^{2}\left(  0,T;\mathbb{R}^{n}\right)  $,
with components $f_{1},...,f_{n}$, we set%
\[
e_{f}\left(  t\right)  =\exp\left(  \sum_{k=1}^{n}\int_{0}^{t}f_{k}\left(
s\right)  dW_{s}^{k}-\frac{1}{2}\sum_{k=1}^{n}\int_{0}^{t}\left\vert
f_{k}\left(  s\right)  \right\vert ^{2}ds\right)
\]
for $t\in\left[  0,T\right]  $. From It\^{o} formula%
\[
de_{f}\left(  t\right)  =\sum_{k=1}^{n}f_{k}\left(  t\right)  e_{f}\left(
t\right)  dW_{t}^{k}.
\]
The following result is known, see the argument in \cite{Nualart}:

\begin{lemma}
\label{lemma Wiener space}$\mathcal{D}$ is dense in $\mathcal{H}$.
\end{lemma}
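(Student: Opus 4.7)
The plan is to prove density by duality: show that if $Y \in \mathcal{H}$ satisfies $\mathbb{E}[Y\, e_f(T)] = 0$ for every $f \in F$, then $Y = 0$ almost surely. By Hahn--Banach (or directly by the orthogonal complement characterization in Hilbert space), this is equivalent to density of the linear span of $\mathcal{D}$ in $\mathcal{H}$.

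The key step is to restrict attention to step functions in $F$. Fix a partition $0 = t_0 < t_1 < \dots < t_m = T$ and an integer $n$, and consider $f$ of the form $f_k(s) = \sum_{j=1}^m \lambda_{jk}\,\mathbf{1}_{(t_{j-1},t_j]}(s)$ for $k=1,\ldots,n$. Then
\[
e_f(T) = \exp\!\left(\sum_{j,k} \lambda_{jk}\,\Delta_j W^k - \tfrac{1}{2}\sum_{j,k} \lambda_{jk}^2 (t_j - t_{j-1})\right),
\]
where $\Delta_j W^k := W_{t_j}^k - W_{t_{j-1}}^k$. The map
\[
\Phi(\lambda) := \mathbb{E}\!\left[Y \exp\!\Big(\sum_{j,k} \lambda_{jk}\,\Delta_j W^k\Big)\right]
\]
is well defined and entire in $\lambda \in \mathbb{C}^{mn}$ by dominated convergence and the Gaussian tails of the increments $\Delta_j W^k$. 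The hypothesis on $Y$ forces $\Phi$ to vanish for all real $\lambda$ (up to the deterministic factor involving $\lambda_{jk}^2$), hence by the identity principle $\Phi$ vanishes identically on $\mathbb{C}^{mn}$.

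Specializing to $\lambda_{jk} = i\mu_{jk}$ with $\mu_{jk}\in\mathbb{R}$ gives
\[
\mathbb{E}\!\left[Y \exp\!\Big(i\sum_{j,k} \mu_{jk}\,\Delta_j W^k\Big)\right] = 0
\]
for every $\mu \in \mathbb{R}^{mn}$. Writing $Y = Y^+ - Y^-$ and applying this to the (signed) finite measure on $\mathbb{R}^{mn}$ defined by pushing forward $Y\,dP$ under the vector of increments, uniqueness of Fourier transforms gives $\mathbb{E}[Y \mid \sigma(\Delta_j W^k : 1 \le j \le m,\ 1 \le k \le n)] = 0$.

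Finally, letting the partition get finer, $n \to \infty$, and applying the martingale convergence theorem to the conditional expectations (whose generating $\sigma$-algebras increase to $\overline{\mathcal{G}}_T$ by construction), we conclude $Y = \mathbb{E}[Y \mid \overline{\mathcal{G}}_T] = 0$ a.s. The main technical obstacle is simply justifying analyticity of $\Phi$ and keeping track of the completion of the filtration; both are routine once one checks that the exponential moments $\mathbb{E}[\exp(c|\Delta_j W^k|)]$ are finite, allowing differentiation under the expectation.
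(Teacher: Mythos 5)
Your proof is correct and is essentially the argument the paper itself invokes: the paper gives no proof of this lemma but refers to Nualart's book, where totality of the stochastic exponentials is established by exactly your scheme (take $Y$ orthogonal to $\mathcal{D}$, extend $\lambda\mapsto\mathbb{E}\bigl[Y\exp\bigl(\sum_{j,k}\lambda_{jk}\Delta_jW^k\bigr)\bigr]$ to an entire function, evaluate at imaginary arguments, use Fourier uniqueness to get $\mathbb{E}[Y\mid\sigma(\Delta_jW^k)]=0$, and exhaust $\overline{\mathcal{G}}_T$). The only detail worth making explicit is the choice of a nested sequence of partitions (e.g.\ dyadic) with $n$ increasing, so that the increment $\sigma$-algebras genuinely increase and, by path continuity of the $W^k$, generate $\mathcal{G}_T$ up to null sets; with that, the martingale-convergence step closes the argument as you state.
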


\subsection{Structure and assumptions on the noise\label{subsect noise}}

Let $\left(  \sigma_{k}\right)  _{k\in\mathbb{N}}$ be a sequence of twice
differentiable divergence free vector fields:%
\begin{equation}
\sigma_{k}\in C^{2}\left(  \mathbb{R}^{d},\mathbb{R}^{d}\right)
,\qquad\operatorname{div}\sigma_{k}=0\label{ass1 on sigma}%
\end{equation}
such that
\begin{equation}
\sum_{k=1}^{\infty}\left\vert \sigma_{k}\left(  x\right)  \right\vert
^{2}<\infty\label{ass2 on sigma}%
\end{equation}
for every $x\in\mathbb{R}^{d}$. The matrix-valued function $Q\left(
x,y\right)  \in\mathbb{R}^{d\times d}$, $x,y\in\mathbb{R}^{d}$, given by
\[
Q^{\alpha\beta}\left(  x,y\right)  :=\sum_{k=1}^{\infty}\sigma_{k}^{\alpha
}\left(  x\right)  \sigma_{k}^{\beta}\left(  y\right)
\]
is well defined, (we write $Q^{\alpha\beta}\left(  x,y\right)  $,
$\alpha,\beta=1,...,d$ for its components and similarly for $\sigma
_{k}^{\alpha}\left(  x\right)  $). Our main assumptions on the noise are:
$Q\left(  x,y\right)  $ is twice continuously differentiable in $\left(
x,y\right)  $, bounded with bounded first and second derivatives, that we
summarize in the notation%
\begin{equation}
Q\in C_{b}^{2}\label{assumpt on Q}%
\end{equation}
and%
\begin{equation}
Q\left(  x,x\right)  \geq\nu Id_{\mathbb{R}^{d}}\label{ellipt}%
\end{equation}
for some $\nu>0$, uniformly in $x\in\mathbb{R}^{d}$. It will be shown below,
in Proposition \ref{Prop ellipticity}, that $Q\left(  x,x\right)  $ appears in
the principal part of the second order differential operator $\mathcal{L}$
given by (\ref{def L}); condition (\ref{ellipt}) implies strong ellipticity of
this operator. 

\begin{remark}
In the literature it often assumed that there exists a matrix-valued function
$Q\left(  x\right)  \in\mathbb{R}^{d\times d}$, $x\in\mathbb{R}^{d}$, such
that
\[
Q\left(  x,y\right)  =Q\left(  x-y\right)
\]
(this is equivalent to assume that the Gaussian random field $\sum
_{k=1}^{\infty}\sigma_{k}\left(  x\right)  W_{t}^{k}$ is space homogeneous).
The value $Q\left(  0\right)  =Q\left(  x,x\right)  $ plays a special role and
is often assumed to be a non-degenerate matrix, for simplicity%
\[
Q\left(  0\right)  =Id
\]
the identity matrix in $\mathbb{R}^{d}$. We do not impose these additional
conditions but only (\ref{ellipt}) which corresponds to the non degeneracy of
$Q\left(  0\right)  $.
\end{remark}

\subsection{Definition of solution and main result\label{sect def sol}}

We present now the setting and a suitable definition of quasiregular weak
solutions to equation (\ref{Adve}) , adapted to treat the problem of
well-posedness. Throughout the paper we assume that the vector field $v$
satisfies
\begin{equation}
v\in L^{\infty}\left(  [0,T],L^{p}(\mathbb{R}^{d};\mathbb{R}^{d})\right)
\qquad\text{for some }p\text{ such that }p>d,\text{ }p\geq2\label{con1}%
\end{equation}
\begin{equation}
\operatorname{div}v(t,x)=0\,.\label{con2}%
\end{equation}

\begin{remark}
The condition $p\geq2$, that we do not consider restrictive because we have in
mind mainly the case $d=3$, is imposed to treat solutions $B$ of $L^{2}$
class, but this could be generalized. Also the restriction of diverge free
fields is imposed to simplify a number of arguments and does not look so
restrictive having in mind fluid dynamics; to generalize it, one should
require suitable integrability of the distributional divergences, a technical
generalization that we omit. 
\end{remark}

Moreover, the initial condition is taken to be \
\begin{equation}
B_{0}\in L^{4}(\mathbb{R}^{d})\cap L^{\infty}(\mathbb{R}^{d})\,,\qquad
\operatorname{div}B_{0}=0.\label{assumpt on B0}%
\end{equation}
The noise, as just said above, satisfies (\ref{ass1 on sigma}),
(\ref{ass2 on sigma}), (\ref{assumpt on Q}), (\ref{ellipt}) (the latter
assumption is not needed to give the definition of solution; but it is used in
the proof of both existence and uniqueness of solutions).

The next definition tells us in which sense a stochastic process is a
quasiregular weak solution of (\ref{Adve}). We formally use the identity
\begin{align*}
\int\left[  A,B\right]  \cdot C\ dx  & =\int\left(  \left(  A\cdot
\nabla\right)  B-\left(  B\cdot\nabla\right)  A\right)  \cdot C\ dx\\
& =-\int B\cdot\left(  A\cdot\nabla\right)  C-A\cdot\left(  B\cdot
\nabla\right)  C\ dx
\end{align*}
which holds true for sufficiently smooth and integrable fields such that
$\operatorname{div}A=\operatorname{div}B=0$. Moreover, we use the adjoint
operator $\mathcal{L}^{\ast}$, defined in Proposition \ref{Prop ellipticity}%
\ below, which maps test functions $\varphi\in C_{c}^{\infty}(\mathbb{R}%
^{d},\mathbb{R}^{d})$ into bounded continuous compact support vector fields
$\mathcal{L}^{\ast}\varphi\left(  x\right)  $. 

\begin{definition}
\label{defisolu} A stochastic process $B:[0,T]\times\mathbb{R}^{d}%
\rightarrow\mathbb{R}^{d}$, $B\in L^{2}\left(  \Omega\times\lbrack
0,T],L_{loc}^{2}(\mathbb{R}^{d}\right)  )$ is called a quasiregular weak
solution of the Cauchy problem (\ref{Adve}) when:

i) $\operatorname{div}B\left(  \omega,t\right)  =0$, in the sense of
distributions, for a.e. $\left(  \omega,t\right)  \in\Omega\times\lbrack0,T]$

ii) for any $\varphi\in C_{c}^{\infty}(\mathbb{R}^{d},\mathbb{R}^{d})$, the
real valued process $\int B(t,x)\cdot\varphi(x)dx$ has a continuous
modification which is an $\mathcal{F}_{t}$-semimartingale,

iii) for any $\phi\in C_{c}^{\infty}(\mathbb{R}^{d},\mathbb{R}^{d})$ and for
all $t\in\lbrack0,T]$, we have $\mathbb{P}$-almost surely%
\begin{align*}
&  \int B\left(  t,x\right)  \cdot\phi\left(  x\right)  \ dx-\int_{0}^{t}%
\int\left(  B\left(  s,x\right)  \cdot\left(  v\left(  s,x\right)  \cdot
\nabla\right)  \phi\left(  x\right)  -v\left(  s,x\right)  \cdot\left(
B\left(  s,x\right)  \cdot\nabla\right)  \phi\left(  x\right)  \right)
\ dxdt\\
&  -\sum_{k=1}^{\infty}\int_{0}^{t}\left(  \int\left(  B\left(  s,x\right)
\cdot\left(  \sigma_{k}\left(  x\right)  \cdot\nabla\right)  \phi\left(
x\right)  -\sigma_{k}\left(  x\right)  \cdot\left(  B\left(  s,x\right)
\cdot\nabla\right)  \phi\left(  x\right)  \right)  \ dx\right)  \ dW_{s}^{k}%
\end{align*}%
\begin{equation}
=\int B_{0}\left(  x\right)  \cdot\phi\left(  x\right)  \ dx+\frac{1}{2}%
\int_{0}^{t}\int\mathcal{L}^{\ast}\phi\left(  x\right)  \cdot B\left(
s,x\right)  \ dxdt\label{Ito SPDE}%
\end{equation}

iv)\ (\textit{Regularity in Mean}) For all $n\in\mathbb{N}$ and each function
$f\in L^{2}\left(  0,T;\mathbb{R}^{n}\right)  $, with components
$f_{1},...,f_{n}$, the deterministic function $V\left(  t,x\right)
:=\mathbb{E}[B\left(  t,x\right)  e_{f}\left(  t\right)  ]$ is a measurable
bounded function, which belongs to $L^{2}([0,T];H^{1}(\mathbb{R}^{d}))\cap
C([0,T];L^{2}(\mathbb{R}^{d}))$ and satisfies the parabolic equation%
\begin{equation}
\partial_{t}V+\left[  v-h,V\right]  =\mathcal{L}V\label{PDE}%
\end{equation}
in the weak sense, where $h\left(  t,x\right)  :=\sum_{k=1}^{n}f_{k}\left(
t\right)  \sigma_{k}\left(  x\right)  $.
\end{definition}

We have called \textit{quasiregular} this class of weak solutions because of
the regularity of the expected values $V\left(  t,x\right)  :=\mathbb{E}%
[B\left(  t,x\right)  e_{f}\left(  t\right)  ]$. Equation (\ref{Adve}) has an
hyperbolic nature, it cannot regularize the initial condition; but in the
average there is a regularization, on which we insist in the definition. 

Point (iv), which characterizes this definition compared to others, is imposed
specifically for uniqueness, beyond the fact that emphasizes a particular
regularity property. Thanks to (iv) the proof of uniqueness (understood in the
special sense stressed also in Remark \ref{remark unique} below) becomes
extremely easy compared to other proofs done in the literature for analogous
problems, see for instance \cite{FGP2}. The advantage is that it is sufficient
to use uniqueness tools for the deterministic parabolic equation (\ref{PDE}),
which is much more regular than (\ref{Adve}). The disadvantage is that the
expected values $V\left(  t,x\right)  :=\mathbb{E}[B\left(  t,x\right)
e_{f}\left(  t\right)  ]$ treated by equation (\ref{Adve}) characterize the
solution only with respect to the filtration $\left(  \overline{\mathcal{G}%
}_{t}\right)  $ defined in Section \ref{subsect exponent}.

Let us see the formal motivation for equation (\ref{PDE}). We apply formally
It\^{o} formula to the product of a solution with the stochastic exponential,
in equation (\ref{Adve Ito}). We get%

\begin{align*}
& d\left(  Be_{f}\right)  +\left[  v,B\right]  e_{f}\ dt+\sum_{k=1}^{\infty
}e_{f}\left[  \sigma_{k},B\right]  \ dW_{t}^{k}\\
& =\frac{1}{2}\sum_{k=1}^{\infty}e_{f}\left[  \sigma_{k},\left[  \sigma
_{k},B\right]  \right]  \ dt+\sum_{k=1}^{n}f_{k}Be_{f}dW_{t}^{k}+\sum
_{k=1}^{n}f_{k}e_{f}\left[  \sigma_{k},B\right]  \ dt.
\end{align*}
Taking expectation we obtain%

\begin{align*}
& \partial_{t}V+\left[  v,V\right]  =\frac{1}{2}\sum_{k}\left[  \sigma
_{k},\left[  \sigma_{k},V\right]  \right]  +\sum_{k=1}^{n}f_{k}\left[
\sigma_{k},V\right]  \\
& =\frac{1}{2}\sum_{k}\left[  \sigma_{k},\left[  \sigma_{k},V\right]  \right]
+\left[  \sum_{k=1}^{n}f_{k}\sigma_{k},V\right]  .
\end{align*}

Thus we obtain equation (\ref{PDE}), that we have to interpret rigorously in
weak form.

\begin{theorem}
Under assumptions (\ref{ass1 on sigma}), (\ref{ass2 on sigma}), (\ref{con1}),
(\ref{con2}), (\ref{assumpt on B0}), (\ref{assumpt on Q}), (\ref{ellipt}), a
quasiregular weak solution of the Cauchy problem (\ref{Adve}) exists.
\end{theorem}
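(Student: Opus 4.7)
The plan is a standard regularization-and-limit scheme, with the twist that one must propagate both the approximate solution $B^{n}$ and the averaged quantity $V^{n}=\mathbb{E}[B^{n}e_{f}]$ needed for point (iv). I would first mollify the drift to obtain $v^{n}\in C^{\infty}\cap L^{\infty}([0,T];L^{p})$, divergence-free, with $v^{n}\to v$ strongly in $L^{q}([0,T];L^{p}(\mathbb{R}^{d}))$ for every $q<\infty$. For each $n$, equation (\ref{Adve Ito}) with drift $v^{n}$ has a classical, smooth, adapted, divergence-free solution $B^{n}$, which one may construct via the stochastic flow of characteristics $\Phi^{n}_{t}$ solving the Stratonovich SDE driven by $v^{n}$ and $\{\sigma_{k}\}$, together with the push-forward formula $B^{n}(t,\Phi^{n}_{t}(x))=D\Phi^{n}_{t}(x)\,B_{0}(x)$; the smoothness of $\Phi^{n}_{t}$ follows from the $C^{2}$ regularity of the $\sigma_{k}$ via standard Kunita theory of stochastic flows.

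Next come two layers of uniform estimates. First, for $q\in\{2,4\}$, It\^o's formula applied to $\|B^{n}(t)\|_{L^{q}}^{q}$, together with $\operatorname{div}v^{n}=\operatorname{div}\sigma_{k}=0$, antisymmetry of the Lie bracket, and the exact cancellation between $\mathcal{L}B^{n}$ and the quadratic variation of the stochastic integral inherent in the Stratonovich formulation, yields $\sup_{n}\mathbb{E}\bigl[\sup_{t\in[0,T]}\|B^{n}(t)\|_{L^{q}(\mathbb{R}^{d})}^{q}\bigr]\leq\|B_{0}\|_{L^{q}(\mathbb{R}^{d})}^{q}$. Second, and more importantly for (iv), the formal computation preceding the statement can be made rigorous at level $n$: applying It\^o's formula to $B^{n}e_{f}$ and taking expectations shows that $V^{n}$ solves
\begin{equation*}
\partial_{t}V^{n}+[v^{n}-h,V^{n}]=\mathcal{L}V^{n},\qquad V^{n}(0,\cdot)=B_{0},
\end{equation*}
in the weak sense. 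By strong ellipticity of $\mathcal{L}$ (Proposition~\ref{Prop ellipticity}) and since $v^{n}-h\in L^{\infty}([0,T];L^{p})$ with $p>d$, parabolic energy estimates of Lady\v{z}enskaja--Prodi--Serrin type (along the lines of \cite{Beck}, \cite{Fre1}, \cite{NO}) give $\sup_{n}\bigl(\|V^{n}\|_{L^{\infty}([0,T];L^{2})}+\|V^{n}\|_{L^{2}([0,T];H^{1})}\bigr)<\infty$. Concretely, one tests the parabolic equation against $V^{n}$, uses divergence-freeness to kill the pure transport term, and absorbs the stretching term $-\int V^{n,\alpha}V^{n,\beta}\partial_{\beta}(v^{n}-h)^{\alpha}\,dx$ by integration by parts and a H\"older--Sobolev estimate against $\nu\|\nabla V^{n}\|_{L^{2}}^{2}$, then closes via Gronwall.

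For the passage to the limit, extract a subsequence along which $B^{n}\rightharpoonup B$ in $L^{2}(\Omega\times[0,T];L^{2}_{\mathrm{loc}})$ and, for each choice of $f$, $V^{n}\rightharpoonup V$ in $L^{2}([0,T];H^{1})$. Weak convergence of $B^{n}$ forces $V=\mathbb{E}[Be_{f}]$, while an Aubin--Lions argument applied to the parabolic equation upgrades $V^{n}\to V$ to strong convergence in $L^{2}([0,T];L^{2}_{\mathrm{loc}})$; this suffices to pass to the limit in the weak form of (\ref{PDE}) and hence establish (iv). In the weak SPDE (\ref{Ito SPDE}) the only nontrivial term is the drift $\int\bigl(B^{n}\cdot(v^{n}\cdot\nabla)\phi-v^{n}\cdot(B^{n}\cdot\nabla)\phi\bigr)\,dx$, controlled by the strong convergence $v^{n}\to v$ in $L^{q}([0,T];L^{p})$ against the uniform $L^{2}$ bound on $B^{n}$ (using $p\geq 2$). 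Divergence-freeness and the semimartingale structure pass to the limit, giving (i) and (ii). The main obstacle is the uniform parabolic estimate on $V^{n}$: with $p>d$ one sits essentially at the Lady\v{z}enskaja--Prodi--Serrin threshold, so closing the $L^{2}(H^{1})$ bound against an $L^{\infty}(L^{p})$ drift genuinely requires the full strength of the strong ellipticity (\ref{ellipt}), combined with a careful Sobolev embedding and interpolation argument.
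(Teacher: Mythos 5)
Your overall architecture (mollify $v$ and $B_{0}$, solve via the Kunita flow and the push-forward formula, prove two layers of uniform estimates, pass to the weak limit using linearity) is the paper's, and your treatment of $V^{n}=\mathbb{E}[B^{n}e_{f}]$ — the parabolic equation, the energy estimate closed by ellipticity plus a H\"older--Sobolev interpolation against the $L^{\infty}(L^{p})$ drift, Gronwall — is essentially Section \ref{section existence} of the paper combined with Lemma \ref{lemma interp}. But your first uniform estimate contains a genuine error, and it is the estimate on which the extraction of the weak limit $B$ rests. The claim that It\^o's formula gives $\sup_{n}\mathbb{E}\bigl[\sup_{t}\|B^{n}(t)\|_{L^{q}}^{q}\bigr]\leq\|B_{0}\|_{L^{q}}^{q}$ by ``antisymmetry of the Lie bracket'' and cancellation with $\mathcal{L}B^{n}$ is false. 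Divergence-freeness kills only the pure transport part $\int (v^{n}\cdot\nabla B^{n})\cdot B^{n}\,dx$; the stretching part of the bracket survives, $\int [v^{n},B^{n}]\cdot B^{n}\,dx=-\int (B^{n}\cdot\nabla v^{n})\cdot B^{n}\,dx$, and pathwise there is nothing to absorb it into: the equation is hyperbolic $\omega$ by $\omega$, the It\^o--Stratonovich correction $\mathcal{L}B^{n}$ cancels the martingale quadratic variation only after taking expectations, and the representation formula $B^{\varepsilon}=(D\Phi_{t}^{\varepsilon})(\Psi_{t}^{\varepsilon})B_{0}^{\varepsilon}(\Psi_{t}^{\varepsilon})$ shows that $\|B^{\varepsilon}(t)\|_{L^{q}}$ carries a factor $|D\Phi_{t}^{\varepsilon}|$ which is not controlled uniformly in $\varepsilon$ for $v\in L^{\infty}(0,T;L^{p})$ with general noise. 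That uncontrolled stretching is precisely the phenomenon the paper (and \cite{FMau}) is about; if your estimate held with constant $1$, the deterministic case would be unproblematic too.

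The paper's way around this is to estimate not a pathwise norm but the matrix of second moments $u_{\alpha\beta}(t,x)=\mathbb{E}\bigl[B^{\varepsilon,\alpha}(t,x)B^{\varepsilon,\beta}(t,x)\bigr]$: applying It\^o's formula to $B^{\alpha}B^{\beta}$ and taking expectations yields a \emph{closed deterministic parabolic system} for $u$ with principal part $\tfrac12\sum_{i,j}Q^{ij}(x,x)\partial_{i}\partial_{j}$, so the ellipticity (\ref{ellipt}) becomes available; the stretching terms $\sum_{i}u_{\beta i}\partial_{i}v^{\alpha}+\sum_{i}u_{\alpha i}\partial_{i}v^{\beta}$ are then absorbed into $\tfrac{\nu}{2}\|\nabla u\|_{L^{2}}^{2}$ via the second inequality of Lemma \ref{lemma interp} (exactly the mechanism you use for $V^{n}$, but one level up, on second moments). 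This gives $\sup_{t}\int\bigl(\mathbb{E}[|B^{\varepsilon}(t,x)|^{2}]\bigr)^{2}dx\leq C\int|B_{0}|^{4}dx$ — which is why hypothesis (\ref{assumpt on B0}) demands $B_{0}\in L^{4}$ — and then, by Cauchy--Schwarz on balls, the uniform bound on $\int_{0}^{T}\int_{B_{R}}\mathbb{E}[|B^{\varepsilon}|^{2}]\,dx\,dt$ needed to extract $B$ as a weak limit in $L^{2}(\Omega\times[0,T];L^{2}_{loc})$. Without replacing your $L^{q}$-conservation step by an argument of this kind (or by uniform moment bounds on $D\Phi_{t}^{\varepsilon}$, which the paper obtains only in the special additive-noise case), your compactness argument has no starting point.
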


Recall the definition of the filtration $\left(  \overline{\mathcal{G}}%
_{t}\right)  $ given in Section \ref{subsect exponent}.

\begin{theorem}
Under the assumptions of the previous theorem, let $B^{i}$, $i=1,2$, be two
quasi-regular weak solutions of equation (\ref{Adve}) with the same initial
condition $B_{0}$. Assume that $\int B^{i}(t,x)\varphi(x)dx$ is $\overline
{\mathcal{G}}_{t}$-adapted, for both $i=1,2$, for every $\varphi\in
C_{c}^{\infty}\left(  \mathbb{R}^{3},\mathbb{R}^{3}\right)  $. Then
$B^{1}=B^{2}$.
\end{theorem}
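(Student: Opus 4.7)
The plan is to reduce uniqueness at the SPDE level to uniqueness for a deterministic linear parabolic equation via property (iv) of quasi-regular weak solutions, then invoke Lemma~\ref{lemma Wiener space} to transfer the conclusion back to the process itself. By linearity, set $B := B^{1}-B^{2}$: this is again a quasi-regular weak solution with $B_{0}=0$. For each $n\in\mathbb{N}$ and $f\in L^{2}(0,T;\mathbb{R}^{n})$ define $V(t,x):=\mathbb{E}[B(t,x)\,e_{f}(t)]$. Property (iv) gives $V\in L^{2}([0,T];H^{1})\cap C([0,T];L^{2})$ and says that $V$ solves (\ref{PDE}) weakly. Since $\mathrm{div}\,B=0$ almost surely, $\mathrm{div}\,V=0$ as well, and $V(0,\cdot)=0$.

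The technical core is to prove $V\equiv0$ by an energy estimate. Pair (\ref{PDE}) with $V$ in $L^{2}(\mathbb{R}^{d})$. The pure transport term $\int V\cdot((v-h)\cdot\nabla V)\,dx$ vanishes because $v-h$ is divergence-free. The stretching term $\int V\cdot (V\cdot\nabla)(v-h)\,dx$, after integration by parts using $\mathrm{div}\,V=0$, is controlled by $\|\nabla V\|_{L^{2}}\|V\|_{L^{q}}\|v-h\|_{L^{\infty}_{t}L^{p}_{x}}$ with $1/q=1/2-1/p$; since $p>d$, Gagliardo--Nirenberg interpolation followed by Young's inequality yields the bound $\varepsilon\|\nabla V\|_{L^{2}}^{2}+C_{\varepsilon}(t)\|V\|_{L^{2}}^{2}$ with $C_{\varepsilon}\in L^{1}(0,T)$. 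Meanwhile the dissipative term $\int V\cdot\mathcal{L}V\,dx$ yields an upper bound $\le -\tfrac{\nu}{2}\|\nabla V\|_{L^{2}}^{2}+K\|V\|_{L^{2}}^{2}$ by the strong ellipticity (\ref{ellipt}) established in Proposition~\ref{Prop ellipticity}, the lower-order commutator pieces being harmless because $Q\in C_b^2$. Choosing $\varepsilon$ small and applying Gronwall gives $V\equiv 0$ on $[0,T]$.

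With $V\equiv 0$ for every $f\in F$, fix $\varphi\in C_{c}^{\infty}(\mathbb{R}^{d},\mathbb{R}^{d})$ and $t\in[0,T]$, and set $Y_{t}:=\int B(t,x)\cdot\varphi(x)\,dx$. By hypothesis $Y_{t}$ is $\overline{\mathcal{G}}_{t}$-measurable and in $L^{2}(\Omega)$, and by Fubini $\mathbb{E}[Y_{t}\,e_{f}(t)]=\int V(t,x)\cdot\varphi(x)\,dx=0$ for every $f\in F$. Restricting to $f$ supported in $[0,t]$, the family $\{e_{f}(t)\}$ is exactly the set of stochastic exponentials produced by Lemma~\ref{lemma Wiener space} applied on the interval $[0,t]$; it is therefore dense in $L^{2}(\Omega,\overline{\mathcal{G}}_{t},P)$. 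Hence $Y_{t}=0$ almost surely. Taking a countable dense family of test functions $\varphi$ then forces $B(t,\cdot)=0$ as a distribution for almost every $t$ almost surely, i.e.\ $B^{1}=B^{2}$.

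The main obstacle is the parabolic energy estimate of the second paragraph: the drift $v$ only has integrability, so the stretching term is borderline on the Ladyzhenskaja--Prodi--Serrin scale, and the argument crucially combines the divergence-free structure of $V$ (to move a derivative off the rough coefficient via integration by parts) with the uniform positivity (\ref{ellipt}) of $Q(x,x)$ to absorb the resulting $\|\nabla V\|_{L^{2}}^{2}$ term. The remaining steps --- deriving (\ref{PDE}) is part of definition (iv), and the density argument is a direct application of Lemma~\ref{lemma Wiener space} --- are essentially bookkeeping.
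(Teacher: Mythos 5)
Your proposal follows essentially the same route as the paper's proof: reduce by linearity to $B=B^{1}-B^{2}$ with $B_{0}=0$, use property (iv) to pass to the deterministic parabolic equation for $V=\mathbb{E}[B\,e_{f}]$, show $V\equiv 0$ by combining the ellipticity of $\mathcal{L}$ from Proposition \ref{Prop ellipticity} with the interpolation control of the $v$-terms from Lemma \ref{lemma interp}, and conclude via the density of stochastic exponentials in $L^{2}(\Omega,\overline{\mathcal{G}}_{t},P)$ together with the assumed $\overline{\mathcal{G}}_{t}$-measurability of $\int B(t,x)\cdot\varphi(x)\,dx$. The only real difference is presentational: where you run a formal energy estimate and Gronwall directly, the paper verifies continuity and coercivity of the associated bilinear form on the Gelfand triple and invokes the Lions variational uniqueness theorem, which is precisely what licenses testing the weak solution $V\in L^{2}(0,T;H^{1})\cap C([0,T];L^{2})$ against itself.
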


\begin{remark}
\label{remark unique}This notion of uniqueness is weaker than the classical
notion of pathwise uniqueness. One says that pathwise uniqueness holds when
given a filtered probability space $\left(  \Omega,\mathcal{F},\mathcal{F}%
_{t},P\right)  $, a sequence $\left\{  W_{t}^{k}\right\}  _{k\in\mathbb{N}}$
of independent Brownian motions (they are Brownian motions with respect to
$\left(  \mathcal{F}_{t}\right)  $ but in general they do not generate
$\left(  \mathcal{F}_{t}\right)  $), and given a generic initial condition in
a suitable class, then two solutions adapted to $\left(  \mathcal{F}%
_{t}\right)  $ coincide. Here we prove only that two solutions coincide when
they are adapted to $\left(  \overline{\mathcal{G}}_{t}\right)  $.
\end{remark}

\section{Preliminaries on the operator $\mathcal{L}$ and Interpolation
inequalities\label{sect prelim}}

\subsection{The differential operator $\mathcal{L}$}

A key role is played by the differential operator $\mathcal{L}$ defined by
(\ref{def L}). We state here its property of uniform ellipticity, based on the
assumptions on $Q$.

\begin{proposition}
\label{Prop ellipticity}Assume $Q$ to be twice continuously differentiable,
bounded with bounded first and second derivatives, and%
\[
Q\left(  x,x\right)  \geq\nu Id_{\mathbb{R}^{d}}%
\]
for some $\nu>0$, uniformly in $x\in\mathbb{R}^{d}$. Then $\mathcal{L}$ is
well defined, uniformly elliptic. In particular, there exists $C>0$ such that
\[
-\int_{\mathbb{R}^{d}}\mathcal{L}B\left(  x\right)  \cdot B\left(  x\right)
dx\geq\frac{\nu}{2}\int_{\mathbb{R}^{d}}\left\vert DB\left(  x\right)
\right\vert ^{2}dx-C\int_{\mathbb{R}^{d}}\left\vert B\left(  x\right)
\right\vert ^{2}dx
\]
for all $B\in W^{1,2}\left(  \mathbb{R}^{d},\mathbb{R}^{d}\right)  $. Moreover
it has the form%
\begin{align*}
\left(  \mathcal{L}B\right)  ^{\alpha}\left(  x\right)   &  =\sum_{i,j=1}%
^{d}a_{ij}\left(  x\right)  \partial_{i}\partial_{j}B^{\alpha}\left(
x\right)  \\
&  +\sum_{i,\beta=1}^{d}b_{i}^{\alpha\beta}\left(  x\right)  \partial
_{i}B^{\beta}\left(  x\right)  +\sum_{\beta=1}^{d}c^{\alpha\beta}\left(
x\right)  B^{\beta}\left(  x\right)
\end{align*}
where $a_{ij}$ is twice continuously differentiable, bounded with bounded
first and second derivatives, $b_{i}^{\alpha\beta}$ is continuously
differentiable, bounded with bounded first derivatives and $c^{\alpha\beta}$
is bounded continuous. The formal adjoint operator $\mathcal{L}^{\ast}$ given
by
\begin{align*}
\mathcal{L}^{\ast}\phi\left(  x\right)    & =\sum_{i,j=1}^{d}\partial
_{i}\partial_{j}\left(  a_{ij}\left(  x\right)  \phi^{\alpha}\left(  x\right)
\right)  \\
& -\sum_{i,\beta=1}^{d}\partial_{i}\left(  b_{i}^{\alpha\beta}\left(
x\right)  \phi^{\beta}\left(  x\right)  \right)  +\sum_{\beta=1}^{d}%
c^{\alpha\beta}\left(  x\right)  \phi^{\beta}\left(  x\right)
\end{align*}
maps vector fields $\phi$ that are twice continuously differentiable, bounded
with bounded first and second derivatives, into vector fields $\mathcal{L}%
^{\ast}\phi$ hat are bounded continuous.
\end{proposition}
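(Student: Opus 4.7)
The plan is to prove everything by explicit expansion: write the double Lie bracket $\left[\sigma_k,\left[\sigma_k,B\right]\right]$ in coordinates, sort the result by the order of derivatives of $B$, and identify each coefficient with a value of $Q$ or one of its partial derivatives restricted to the diagonal $y=x$. Once that dictionary is in place, well-definedness, the stated form, the regularity of $a_{ij}$, $b_i^{\alpha\beta}$, $c^{\alpha\beta}$, and the adjoint formula all follow mechanically from the hypothesis $Q\in C_b^2$.

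Concretely, since $[\sigma_k,C]^\alpha=\sigma_k^i\partial_i C^\alpha-C^i\partial_i\sigma_k^\alpha$, iterating once gives a principal-order term $\sum_k\sigma_k^i\sigma_k^j\partial_i\partial_j B^\alpha$, so by the definition of $Q$ I read off $a_{ij}(x)=\tfrac12 Q^{ij}(x,x)$. The first- and zeroth-order terms collect series of the form $\sum_k\partial^\gamma\sigma_k^{\cdot}(x)\,\partial^\delta\sigma_k^{\cdot}(x)$ with $|\gamma|+|\delta|\le 2$; each such series equals $(\partial_x^\gamma\partial_y^\delta Q^{\cdot\cdot})(x,x)$, so absolute convergence is automatic from $Q\in C_b^2$ (via Cauchy--Schwarz on each pair of factors). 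Using that $x\mapsto F(x,x)$ preserves $C^k$ regularity whenever $F\in C_b^k$ (by the chain rule $\partial_l F(x,x)=\partial_{x_l}F(x,x)+\partial_{y_l}F(x,x)$), I obtain $a_{ij}\in C_b^2$ (involves $Q$ itself), $b_i^{\alpha\beta}\in C_b^1$ (involves first derivatives of $Q$), and $c^{\alpha\beta}\in C_b$ (involves second derivatives of $Q$). The adjoint formula follows from a formal integration by parts against a test field, and the mapping property of $\mathcal{L}^{\ast}$ into bounded continuous compactly supported vector fields is immediate from these regularities.

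For coercivity, the hypothesis $Q(x,x)\ge\nu\,Id$ yields $a_{ij}(x)\xi^i\xi^j\ge\tfrac{\nu}{2}|\xi|^2$ uniformly in $x$. Since $a_{ij}\in C_b^1$ and $B\in W^{1,2}$, integrating the principal part by parts gives
\[
-\int a_{ij}\partial_i\partial_j B^\alpha\,B^\alpha\,dx=\int a_{ij}\partial_i B^\alpha\partial_j B^\alpha\,dx+\int(\partial_i a_{ij})B^\alpha\partial_j B^\alpha\,dx,
\]
whose first term is at least $\tfrac{\nu}{2}\|DB\|_{L^2}^2$. The remainder, together with the contributions from $b_i^{\alpha\beta}\partial_i B^\beta\cdot B^\alpha$ and $c^{\alpha\beta}B^\beta B^\alpha$, consists of integrals of bounded functions against $|B||DB|$ or $|B|^2$, each of which is controlled by $\varepsilon\|DB\|_{L^2}^2+C_\varepsilon\|B\|_{L^2}^2$ via Young's inequality; choosing $\varepsilon$ small absorbs the gradient piece and yields the stated bound. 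The main obstacle here is really only organizational: one must be careful in the expansion to match each summand exactly to the correct $Q$-derivative on the diagonal so as not to inadvertently require more than $Q\in C_b^2$.
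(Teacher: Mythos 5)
Your proposal is correct and follows essentially the same route as the paper: expand $\sum_k[\sigma_k,[\sigma_k,B]]$ in coordinates, identify the coefficients of $\partial_i\partial_jB$, $\partial_iB$ and $B$ with $Q$ and its first and second derivatives on the diagonal $y=x$ (so that $a_{ij}=\tfrac12Q^{ij}(x,x)$ inherits the regularity and ellipticity of $Q$), and then obtain the G\aa rding-type inequality by integrating the principal part by parts and absorbing the lower-order remainders via Young's inequality. The only (shared, cosmetic) caveat is the exact constant in front of $\int|DB|^2$, which after absorbing the $\varepsilon$-terms comes out marginally below $\nu/2$; this is immaterial to how the estimate is used.
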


We prepare the proof by the explicit computation of $\left[  \sigma
_{k},\left[  \sigma_{k},B\right]  \right]  $. We have%
\begin{align*}
\left[  \sigma_{k},\left[  \sigma_{k},B\right]  \right]   &  =\left(
\sigma_{k}\cdot\nabla\right)  \left[  \sigma_{k},B\right]  -\left(  \left[
\sigma_{k},B\right]  \cdot\nabla\right)  \sigma_{k}\\
&  =\left(  \sigma_{k}\cdot\nabla\right)  \left(  \sigma_{k}\cdot
\nabla\right)  B_{t}-\left(  \sigma_{k}\cdot\nabla\right)  \left(  B_{t}%
\cdot\nabla\right)  \sigma_{k}-\left(  \left(  \sigma_{k}\cdot\nabla\right)
B_{t}\cdot\nabla\right)  \sigma_{k}+\left(  \left(  B_{t}\cdot\nabla\right)
\sigma_{k}\cdot\nabla\right)  \sigma_{k}.
\end{align*}
All terms can be expressed by means of $Q$, after the following remarks. The
function $Q\left(  x,y\right)  $ is defined on $\mathbb{R}^{d}\times
\mathbb{R}^{d}$ with values in matrices $\mathbb{R}^{d\times d}$. When we
differentiate $Q^{\alpha\beta}\left(  x,y\right)  $ with respect to the first
set of components, we write $\left(  \partial_{i}^{\left(  1\right)
}Q^{\alpha\beta}\right)  \left(  x,y\right)  $:%
\[
\left(  \partial_{i}^{\left(  1\right)  }Q^{\alpha\beta}\right)  \left(
x,y\right)  =\lim_{\epsilon\rightarrow0}\frac{Q^{\alpha\beta}\left(
x+\epsilon e_{i},y\right)  -Q^{\alpha\beta}\left(  x,y\right)  }{\epsilon}%
\]
while when we differentiate $Q^{\alpha\beta}\left(  x,y\right)  $ with respect
to the second set of components, we write $\left(  \partial_{i}^{\left(
2\right)  }Q^{\alpha\beta}\right)  \left(  x,y\right)  $. We have%
\begin{align*}
\left(  \partial_{i}^{\left(  1\right)  }Q^{\alpha\beta}\right)  \left(
x,y\right)   &  =\partial_{x_{i}}\left(  Q^{\alpha\beta}\left(  x,y\right)
\right)  =\sum_{k=1}^{\infty}\left(  \partial_{i}\sigma_{k}^{\alpha}\right)
\left(  x\right)  \sigma_{k}^{\beta}\left(  y\right) \\
\left(  \partial_{i}^{\left(  2\right)  }Q^{\alpha\beta}\right)  \left(
x,y\right)   &  =\partial_{y_{i}}\left(  Q^{\alpha\beta}\left(  x,y\right)
\right)  =\sum_{k=1}^{\infty}\sigma_{k}^{\alpha}\left(  x\right)  \left(
\partial_{i}\sigma_{k}^{\beta}\right)  \left(  y\right)  .
\end{align*}
Hence, when we evaluate at $y=x$,%
\begin{align*}
\sum_{k=1}^{\infty}\left(  \partial_{i}\sigma_{k}^{\alpha}\right)  \left(
x\right)  \sigma_{k}^{\beta}\left(  x\right)   &  =\left(  \partial
_{i}^{\left(  1\right)  }Q^{\alpha\beta}\right)  \left(  x,x\right) \\
\sum_{k=1}^{\infty}\sigma_{k}^{\alpha}\left(  x\right)  \left(  \partial
_{i}\sigma_{k}^{\beta}\right)  \left(  x\right)   &  =\left(  \partial
_{i}^{\left(  2\right)  }Q^{\alpha\beta}\right)  \left(  x,x\right)  .
\end{align*}
Similarly,%
\[
\left(  \partial_{j}^{\left(  1\right)  }\partial_{i}^{\left(  1\right)
}Q^{\alpha\beta}\right)  \left(  x,y\right)  =\partial_{x_{j}}\partial_{x_{i}%
}\left(  Q^{\alpha\beta}\left(  x,y\right)  \right)  =\sum_{k=1}^{\infty
}\left(  \partial_{j}\partial_{i}\sigma_{k}^{\alpha}\right)  \left(  x\right)
\sigma_{k}^{\beta}\left(  y\right)
\]
whence, at $y=x$,%
\[
\sum_{k=1}^{\infty}\left(  \partial_{j}\partial_{i}\sigma_{k}^{\alpha}\right)
\left(  x\right)  \sigma_{k}^{\beta}\left(  x\right)  =\left(  \partial
_{j}^{\left(  1\right)  }\partial_{i}^{\left(  1\right)  }Q^{\alpha\beta
}\right)  \left(  x,x\right)
\]
and so on for the other second derivatives. Let us denote by $\left[
\sigma_{k},\left[  \sigma_{k},B\right]  \right]  ^{\left(  \alpha\right)  }$
the $\alpha$-component of the vector $\left[  \sigma_{k},\left[  \sigma
_{k},B\right]  \right]  $.

\begin{lemma}%
\begin{align*}
&  \sum_{k}\left[  \sigma_{k},\left[  \sigma_{k},B\right]  \right]  ^{\left(
\alpha\right)  }\left(  x\right) \\
&  =\sum_{i,j=1}^{d}Q^{ij}\left(  x,x\right)  \partial_{i}\partial
_{j}B^{\alpha}\left(  x\right) \\
&  +\sum_{i=1}^{d}\sum_{\gamma=1}^{d}\partial_{\gamma}^{\left(  2\right)
}Q^{\gamma i}\left(  x,x\right)  \partial_{i}B^{\alpha}\left(  x\right)
-\sum_{i,\beta=1}^{d}2\left(  \partial_{\beta}^{\left(  2\right)  }Q^{i\alpha
}\right)  \left(  x,x\right)  \partial_{i}B^{\beta}\left(  x\right) \\
&  +\sum_{\beta,\gamma=1}^{d}\partial_{\beta}^{\left(  1\right)  }%
\partial_{\gamma}^{\left(  2\right)  }Q^{\gamma\alpha}\left(  x,x\right)
B^{\beta}\left(  x\right)  -\sum_{\gamma,\beta=1}^{d}\left(  \partial_{\gamma
}^{\left(  2\right)  }\partial_{\beta}^{\left(  2\right)  }Q^{\gamma\alpha
}\right)  \left(  x,x\right)  B^{\beta}\left(  x\right)  .
\end{align*}
Therefore the operator $\mathcal{L}$ has coefficients given by%
\begin{align*}
a_{ij}\left(  x\right)   &  =\frac{1}{2}Q^{ij}\left(  x,x\right) \\
b_{i}^{\alpha\beta}\left(  x\right)   &  =\frac{1}{2}\sum_{\gamma=1}%
^{d}\partial_{\gamma}^{\left(  2\right)  }Q^{\gamma i}\left(  x,x\right)
\delta_{\alpha\beta}-\left(  \partial_{\beta}^{\left(  2\right)  }Q^{i\alpha
}\right)  \left(  x,x\right) \\
c^{\alpha\beta}\left(  x\right)   &  =\frac{1}{2}\sum_{\gamma=1}^{d}%
\partial_{\beta}^{\left(  1\right)  }\partial_{\gamma}^{\left(  2\right)
}Q^{\gamma\alpha}\left(  x,x\right)  -\frac{1}{2}\sum_{\gamma=1}^{d}\left(
\partial_{\gamma}^{\left(  2\right)  }\partial_{\beta}^{\left(  2\right)
}Q^{\gamma\alpha}\right)  \left(  x,x\right)  .
\end{align*}

\end{lemma}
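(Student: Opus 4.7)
The plan is to compute $\sum_k [\sigma_k,[\sigma_k,B]]^{(\alpha)}(x)$ by expanding in coordinates, to recognize each resulting sum over $k$ as a partial derivative of $Q$ evaluated on the diagonal $\{y=x\}$, and then to read off $a_{ij}$, $b_i^{\alpha\beta}$, $c^{\alpha\beta}$ by collecting terms according to the order of differentiation falling on $B$.

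First I would take the four-term expansion of $[\sigma_k,[\sigma_k,B]]$ displayed immediately before the lemma statement and rewrite each summand in index form. The Leibniz rule decomposes the $\alpha$-component into: a pure second-order piece $\sigma_k^i\sigma_k^j \,\partial_i\partial_j B^\alpha$ coming only from $(\sigma_k\cdot\nabla)^2 B$; three first-order pieces of schematic form $(\sigma_k \cdot \partial\sigma_k)\,\partial B$, one of which retains the index $\alpha$ on $B$ (from the first term, producing the Kronecker $\delta_{\alpha\beta}$ contribution to $b_i^{\alpha\beta}$) and two of which carry a free index $\beta$ on $B$ (from the second and third terms, and which produce identical expressions after summation in $k$); and finally two zeroth-order pieces involving either $\sigma_k \cdot \partial^2\sigma_k$ (from the second term) or $\partial\sigma_k \cdot \partial\sigma_k$ (from the fourth term).

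Next I would apply the identities already recorded just before the lemma, namely $\sum_k \sigma_k^\alpha(x)\sigma_k^\beta(x)=Q^{\alpha\beta}(x,x)$, $\sum_k \sigma_k^\alpha(x)(\partial_i\sigma_k^\beta)(x)=\partial_i^{(2)}Q^{\alpha\beta}(x,x)$, and the analogues for the second derivatives. Each of the pieces above is thereby converted into a partial derivative of $Q$ on the diagonal multiplied by a partial derivative of $B$. The two identical first-order contributions with free index $\beta$ on $B$ account for the factor $2$ appearing in the displayed formula. Convergence is not an issue because the hypothesis $Q\in C_b^2$ packages exactly the statement that these termwise differentiations of the series defining $Q$ are licit.

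Finally, I would multiply by $\tfrac{1}{2}$ to pass from $\sum_k [\sigma_k,[\sigma_k,B]]^{(\alpha)}$ to $(\mathcal{L}B)^\alpha$ and match against the standard second-order form $\sum_{i,j} a_{ij}\partial_i\partial_j B^\alpha + \sum_{i,\beta} b_i^{\alpha\beta}\partial_i B^\beta + \sum_\beta c^{\alpha\beta} B^\beta$. Reading off coefficient by coefficient yields the stated formulas for $a_{ij}$, $b_i^{\alpha\beta}$, $c^{\alpha\beta}$. The only genuine work in the argument is careful bookkeeping of indices and signs; there is no analytic obstacle, and the divergence-free hypothesis on the $\sigma_k$ plays no role at this stage (it enters later, in the weak formulations of the Lie-bracket terms).
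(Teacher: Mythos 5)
Your proposal is correct and follows essentially the same route as the paper: expand the four-term bracket $\left[\sigma_{k},\left[\sigma_{k},B\right]\right]$ in coordinates via the Leibniz rule, identify each sum over $k$ with a derivative of $Q$ on the diagonal using the identities recorded before the lemma (the second and third terms indeed coalesce to give the factor $2$), and divide by two to read off $a_{ij}$, $b_{i}^{\alpha\beta}$, $c^{\alpha\beta}$. Your added remarks on convergence via $Q\in C_{b}^{2}$ and on the irrelevance of the divergence-free condition here are consistent with, though not spelled out in, the paper's computation.
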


\begin{proof}%
\begin{align*}
&  \sum_{k}\left(  \sigma_{k}\cdot\nabla\right)  \left(  \sigma_{k}\cdot
\nabla\right)  B_{t}^{i}-\sum_{k}\left(  \sigma_{k}\cdot\nabla\right)  \left(
B_{t}\cdot\nabla\right)  \sigma_{k}^{i}-\sum_{k}\left(  \left(  \sigma
_{k}\cdot\nabla\right)  B_{t}\cdot\nabla\right)  \sigma_{k}^{i}+\sum
_{k}\left(  \left(  B_{t}\cdot\nabla\right)  \sigma_{k}\cdot\nabla\right)
\sigma_{k}^{i}\\
&  =\sum_{k}\sum_{\alpha\beta}\left(  \sigma_{k}^{\alpha}\partial_{\alpha
}\left(  \sigma_{k}^{\beta}\partial_{\beta}B_{t}^{i}\right)  -\sigma
_{k}^{\alpha}\partial_{\alpha}\left(  B_{t}^{\beta}\partial_{\beta}\sigma
_{k}^{i}\right)  -\sigma_{k}^{\alpha}\partial_{\alpha}B_{t}^{\beta}%
\partial_{\beta}\sigma_{k}^{i}+B_{t}^{\alpha}\partial_{\alpha}\sigma
_{k}^{\beta}\partial_{\beta}\sigma_{k}^{i}\right) \\
&  =\sum_{k}\sum_{\alpha\beta}\left(  \sigma_{k}^{\alpha}\sigma_{k}^{\beta
}\partial_{\alpha}\partial_{\beta}B_{t}^{i}+\sigma_{k}^{\alpha}\partial
_{\alpha}\sigma_{k}^{\beta}\partial_{\beta}B_{t}^{i}-\sigma_{k}^{\alpha}%
B_{t}^{\beta}\partial_{\alpha}\partial_{\beta}\sigma_{k}^{i}\right) \\
&  +\sum_{k}\sum_{\alpha\beta}\left(  -\sigma_{k}^{\alpha}\partial_{\alpha
}B_{t}^{\beta}\partial_{\beta}\sigma_{k}^{i}-\sigma_{k}^{\alpha}%
\partial_{\alpha}B_{t}^{\beta}\partial_{\beta}\sigma_{k}^{i}+B_{t}^{\alpha
}\partial_{\alpha}\sigma_{k}^{\beta}\partial_{\beta}\sigma_{k}^{i}\right) \\
&  =\sum_{\alpha\beta}Q^{\alpha\beta}\left(  x,x\right)  \partial_{\alpha
}\partial_{\beta}B_{t}^{i}+\left(  \partial_{\alpha}^{\left(  2\right)
}Q^{\alpha\beta}\right)  \left(  x,x\right)  \partial_{\beta}B_{t}^{i}-\left(
\partial_{\alpha}^{\left(  2\right)  }\partial_{\beta}^{\left(  2\right)
}Q^{\alpha i}\right)  \left(  x,x\right)  B_{t}^{\beta}\\
&  +\sum_{\alpha\beta}\left(  -2\partial_{\beta}^{\left(  2\right)  }Q^{\alpha
i}\partial_{\alpha}B_{t}^{\beta}+\partial_{\alpha}^{\left(  1\right)
}\partial_{\beta}^{\left(  2\right)  }Q^{\beta i}B_{t}^{\alpha}\right)  .
\end{align*}
The result of the lemma is just a rewriting of this expression.
\end{proof}

Now, we do the proof of the Proposition \ref{Prop ellipticity}.

\begin{proof}
[Proof of Proposition \ref{Prop ellipticity}]Let us set%
\[
R_{0}:=\sum_{\alpha\beta}\left(  \left(  \partial_{\alpha}^{\left(  2\right)
}Q^{\alpha\beta}\right)  \left(  x,x\right)  \partial_{\beta}B_{t}^{i}-\left(
\partial_{\alpha}^{\left(  2\right)  }\partial_{\beta}^{\left(  2\right)
}Q^{\alpha i}\right)  \left(  x,x\right)  B_{t}^{\beta}\right)  B
\]%
\[
+\sum_{\alpha\beta}\left(  \left(  -2\partial_{\beta}^{\left(  2\right)
}Q^{\alpha i}\partial_{\alpha}B_{t}^{\beta}+\partial_{\alpha}^{\left(
1\right)  }\partial_{\beta}^{\left(  2\right)  }Q^{\beta i}B_{t}^{\alpha
}\right)  \right)  B
\]
Then we have
\begin{align*}
-\int_{\mathbb{R}^{d}}\mathcal{L}B\left(  x\right)  \cdot B\left(  x\right)
dx &  =-\sum_{i}\int_{\mathbb{R}^{d}}\sum_{\alpha\beta}Q^{\alpha\beta}\left(
x,x\right)  \partial_{\alpha}\partial_{\beta}B^{i}\left(  x\right)
B^{i}\left(  x\right)  dx+R_{0}\\
&  =\sum_{i}\int_{\mathbb{R}^{d}}\sum_{\alpha\beta}Q^{\alpha\beta}\left(
x,x\right)  \partial_{\beta}B^{i}\left(  x\right)  \partial_{\alpha}%
B^{i}\left(  x\right)  dx\\
&  +\sum_{i}\int_{\mathbb{R}^{d}}\sum_{\alpha\beta}\partial_{\alpha}%
Q^{\alpha\beta}\left(  x,x\right)  \partial_{\beta}B^{i}\left(  x\right)
B^{i}\left(  x\right)  dx+R_{0}\\
&  \geq\nu\sum_{i}\int_{\mathbb{R}^{d}}\left\vert \nabla B^{i}\left(
x\right)  \right\vert ^{2}dx-\sum_{i\alpha\beta}\int_{\mathbb{R}^{d}%
}\left\vert \partial_{\alpha}Q^{\alpha\beta}\left(  x,x\right)  \right\vert
\left\vert \partial_{\beta}B^{i}\left(  x\right)  \right\vert \left\vert
B^{i}\left(  x\right)  \right\vert dx-\left\vert R_{0}\right\vert \\
&  =\nu\int_{\mathbb{R}^{d}}\left\vert DB\left(  x\right)  \right\vert
^{2}dx-R_{1}-\left\vert R_{0}\right\vert
\end{align*}
with $R_{1}$ defined by the identity. The estimates on $\left\vert
R_{0}\right\vert $ are similar to the estimate on $R_{1}$, so we limit
ourselves to this one. We have%
\[
R_{1}\leq C_{1}\sum_{i\alpha\beta}\int_{\mathbb{R}^{d}}\left\vert
\partial_{\beta}B^{i}\left(  x\right)  \right\vert \left\vert B^{i}\left(
x\right)  \right\vert dx
\]
because we have assumed that $Q$ has bounded derivatives,
\[
\leq C_{2}\int_{\mathbb{R}^{d}}\left\vert DB\left(  x\right)  \right\vert
\left\vert B\left(  x\right)  \right\vert dx\leq\frac{\nu}{4}\int%
_{\mathbb{R}^{d}}\left\vert DB\left(  x\right)  \right\vert ^{2}dx+C_{3}%
\int_{\mathbb{R}^{d}}\left\vert B\left(  x\right)  \right\vert ^{2}dx.
\]
Here we have denoted by $C_{i}>0$ some constants, possibly depending on $\nu$
and other factors, but not on $B$. In the analogous estimates for $\left\vert
R_{0}\right\vert $,
\[
\left\vert R_{0}\right\vert \leq\frac{\nu}{4}\int_{\mathbb{R}^{d}}\left\vert
DB\left(  x\right)  \right\vert ^{2}dx+C_{4}\int_{\mathbb{R}^{d}}\left\vert
B\left(  x\right)  \right\vert ^{2}dx
\]
we use the assumption that the second derivatives of $Q$ are bounded. We
conclude that
\[
-\int_{\mathbb{R}^{d}}\mathcal{L}B\left(  x\right)  \cdot B\left(  x\right)
dx\geq\frac{\nu}{2}\int_{\mathbb{R}^{d}}\left\vert DB\left(  x\right)
\right\vert ^{2}dx-\left(  C_{3}+C_{4}\right)  \int_{\mathbb{R}^{d}}\left\vert
B\left(  x\right)  \right\vert ^{2}dx.
\]

\end{proof}

\subsection{Interpolation inequalities}

If we assume $v$ globally bounded, some estimates needed in the next sections
simplify. Since we want to treat $v$ of a certain integrability class, we need
some additional computations based on interpolation inequalities. We find
convenient to isolate here the precise inequalities used in the sequel.

\begin{lemma}
\label{lemma interp}If $f,h\in W^{1,2}\left(  \mathbb{R}^{d}\right)  $ and
$g\in L^{p}\left(  \mathbb{R}^{d}\right)  $ for some $p>d$, then%
\[
\int_{\mathbb{R}^{d}}f\left(  x\right)  g\left(  x\right)  \partial
_{i}h\left(  x\right)  dx\leq C\left\Vert g\right\Vert _{L^{p}\left(
\mathbb{R}^{d}\right)  }\left\Vert f\right\Vert _{W^{1,2}\left(
\mathbb{R}^{d}\right)  }\left\Vert h\right\Vert _{W^{1,2}\left(
\mathbb{R}^{d}\right)  }%
\]
where $C>0$ is a constant independent of $f,g,h$ and for every $\epsilon>0$
there is a constant $C_{\epsilon}>0$ such that
\[
\int_{\mathbb{R}^{d}}f\left(  x\right)  g\left(  x\right)  \partial
_{i}h\left(  x\right)  dx\leq\epsilon\left\Vert h\right\Vert _{W^{1,2}\left(
\mathbb{R}^{d}\right)  }^{2}+\epsilon\left\Vert f\right\Vert _{W^{1,2}\left(
\mathbb{R}^{d}\right)  }^{2}+C_{\epsilon}\left\Vert g\right\Vert
_{L^{p}\left(  \mathbb{R}^{d}\right)  }^{\frac{2p}{p-d}}\left\Vert
f\right\Vert _{L^{2}\left(  \mathbb{R}^{d}\right)  }^{2}.
\]

\end{lemma}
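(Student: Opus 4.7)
The plan is to reduce both inequalities to a single three-factor H\"older application, then (for the second inequality) to refine it with a Gagliardo--Nirenberg interpolation followed by Young's inequality. The hypothesis $p>d$ is precisely what makes this interpolation chain close.

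For the first inequality, I would apply the three-factor H\"older inequality
\[
\int_{\mathbb{R}^{d}} f\,g\,\partial_{i}h\,dx \le \|g\|_{L^{p}}\|f\|_{L^{q_{1}}}\|\partial_{i}h\|_{L^{2}},
\]
with $1/p+1/q_{1}+1/2=1$, i.e.\ $q_{1}=2p/(p-2)$. For $d\ge 3$ the Sobolev embedding $W^{1,2}\hookrightarrow L^{q_{1}}$ holds as soon as $q_{1}\le 2^{\ast}=2d/(d-2)$, which is equivalent to $p\ge d$, so it applies. In the low-dimensional cases $d=1,2$ the embedding into $L^{q_{1}}$ is even stronger (and if $p=2$, forcing $d=1$, the bound $\|f\|_{L^{\infty}}\le C\|f\|_{W^{1,2}}$ closes the argument directly). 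Combining with $\|\partial_{i}h\|_{L^{2}}\le\|h\|_{W^{1,2}}$ yields the first estimate.

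For the second inequality, I would replace Sobolev by Gagliardo--Nirenberg:
\[
\|f\|_{L^{q_{1}}} \le C\,\|f\|_{W^{1,2}}^{\theta}\|f\|_{L^{2}}^{1-\theta},
\]
where scaling forces $\theta = d/2 - d/q_{1} = d/p$; the strict inequality $\theta<1$ is exactly $p>d$. Substituting gives
\[
\int_{\mathbb{R}^{d}} f\,g\,\partial_{i}h\,dx \le C\|g\|_{L^{p}}\|f\|_{W^{1,2}}^{d/p}\|f\|_{L^{2}}^{1-d/p}\|h\|_{W^{1,2}}.
\]
Then I would apply Young's inequality for three factors with exponents $(2,\,2p/d,\,2p/(p-d))$, attached respectively to $\|h\|_{W^{1,2}}$, $\|f\|_{W^{1,2}}^{d/p}$, and $\|g\|_{L^{p}}\|f\|_{L^{2}}^{(p-d)/p}$. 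A direct check of exponents, $\tfrac{1}{2}+\tfrac{d}{2p}+\tfrac{p-d}{2p}=1$, shows Young applies, and the three resulting powers come out to exactly $\|h\|_{W^{1,2}}^{2}$, $\|f\|_{W^{1,2}}^{2}$, and $\|g\|_{L^{p}}^{2p/(p-d)}\|f\|_{L^{2}}^{2}$, as the statement requires. The arbitrary $\epsilon$/$C_{\epsilon}$ splitting follows from the standard rescaling $a\mapsto \lambda a$, $b\mapsto \lambda b$, $c\mapsto \lambda^{-2}c$ inside Young's inequality.

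The only real difficulty is bookkeeping: one must single out the interpolation exponent $\theta=d/p$ so that the Young step produces a clean $\|f\|_{L^{2}}^{2}$ (rather than a fractional power) on the right, and match the three Young exponents so that both $\|h\|_{W^{1,2}}$ and $\|f\|_{W^{1,2}}$ appear squared with an arbitrarily small prefactor. No analytic subtlety beyond standard H\"older, Gagliardo--Nirenberg, and Young manipulations is expected.
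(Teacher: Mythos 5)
Your proposal is correct and follows essentially the same route as the paper: Hölder to isolate $\left\Vert g\right\Vert _{L^{p}}$ and $\left\Vert f\right\Vert _{L^{2p/(p-2)}}$, interpolation of that norm between $L^{2}$ and $W^{1,2}$ with exponent $d/p$ (the paper phrases this as the embedding $W^{d/p,2}\subset L^{2p/(p-2)}$ followed by the interpolation inequality, which is your Gagliardo--Nirenberg step), and then Young's inequality with exponents $p/d$ and $p/(p-d)$. The only differences are cosmetic: the paper works with $\int |f|^{2}|g|^{2}\,dx$ and a two-factor Hölder plus Cauchy--Schwarz where you use a three-factor Hölder, and your explicit treatment of the low-dimensional cases is a harmless addition.
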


\begin{proof}
\textbf{Step 1}.
\[
\int_{\mathbb{R}^{d}}\left\vert f\right\vert ^{2}\left\vert g\right\vert
^{2}dx\leq\left\Vert f\right\Vert _{W^{1,2}\left(  \mathbb{R}^{d}\right)
}^{\frac{6}{p}}\left(  C\left\Vert g\right\Vert _{L^{p}\left(  \mathbb{R}%
^{d}\right)  }^{2}\left\Vert f\right\Vert _{L^{2}\left(  \mathbb{R}%
^{d}\right)  }^{2-\frac{6}{p}}\right)  .
\]
Indeed,%
\[
\int_{\mathbb{R}^{d}}\left\vert f\right\vert ^{2}\left\vert g\right\vert
^{2}dx\leq\left(  \int_{\mathbb{R}^{d}}\left\vert g\right\vert ^{p}dx\right)
^{\frac{2}{p}}\left(  \int_{\mathbb{R}^{d}}\left\vert f\right\vert ^{\frac
{2p}{p-2}}dx\right)  ^{\frac{p-2}{p}}=\left\Vert g\right\Vert _{L^{p}\left(
\mathbb{R}^{d}\right)  }^{2}\left\Vert f\right\Vert _{L^{\frac{2p}{p-2}%
}\left(  \mathbb{R}^{d}\right)  }^{2}.
\]
Recall now that, by Sobolev embedding theorem,%
\[
\left\Vert f\right\Vert _{L^{\frac{2p}{p-2}}\left(  \mathbb{R}^{d}\right)
}\leq C\left\Vert f\right\Vert _{W^{\frac{d}{p},2}\left(  \mathbb{R}%
^{d}\right)  }%
\]
(because, in general, $W^{s,2}\subset L^{r}$ for $\frac{1}{r}=\frac{1}%
{2}-\frac{s}{d}$, hence in our case $s=\frac{d}{2}-\frac{d\left(  p-2\right)
}{2p}=\frac{dp-d\left(  p-2\right)  }{2p}=\frac{d}{p}$) and the interpolation
inequality gives, for $\alpha\in\left(  0,1\right)  $,
\[
\left\Vert f\right\Vert _{W^{\alpha,2}\left(  \mathbb{R}^{d}\right)  }\leq
C\left\Vert f\right\Vert _{L^{2}\left(  \mathbb{R}^{d}\right)  }^{1-\alpha
}\left\Vert f\right\Vert _{W^{1,2}\left(  \mathbb{R}^{d}\right)  }^{\alpha}%
\]
hence%
\[
\left\Vert f\right\Vert _{W^{\frac{d}{p},2}\left(  \mathbb{R}^{d}\right)
}\leq C\left\Vert f\right\Vert _{L^{2}\left(  \mathbb{R}^{d}\right)
}^{1-\frac{d}{p}}\left\Vert f\right\Vert _{W^{1,2}\left(  \mathbb{R}%
^{d}\right)  }^{\frac{d}{p}}.
\]
Summarizing,%
\[
\int_{\mathbb{R}^{d}}\left\vert f\right\vert ^{2}\left\vert g\right\vert
^{2}dx\leq\left\Vert f\right\Vert _{W^{1,2}\left(  \mathbb{R}^{d}\right)
}^{\frac{2d}{p}}\left(  C\left\Vert g\right\Vert _{L^{p}\left(  \mathbb{R}%
^{d}\right)  }^{2}\left\Vert f\right\Vert _{L^{2}\left(  \mathbb{R}%
^{d}\right)  }^{2-\frac{2d}{p}}\right)  .
\]

\textbf{Step 2}. Obviously%
\[
\int_{\mathbb{R}^{d}}f\left(  x\right)  g\left(  x\right)  \partial
_{i}h\left(  x\right)  dx\leq C\int_{\mathbb{R}^{d}}\left\vert f\left(
x\right)  \right\vert \left\vert g\left(  x\right)  \right\vert \left\vert
\partial_{i}h\left(  x\right)  \right\vert dx\leq C\left\Vert h\right\Vert
_{W^{1,2}\left(  \mathbb{R}^{d}\right)  }\left(  \int_{\mathbb{R}^{d}%
}\left\vert f\right\vert ^{2}\left\vert g\right\vert ^{2}dx\right)  ^{1/2}.
\]
Hence from the inequality of Step 1,%
\[
\int_{\mathbb{R}^{d}}f\left(  x\right)  g\left(  x\right)  \partial
_{i}h\left(  x\right)  dx\leq C\left\Vert h\right\Vert _{W^{1,2}\left(
\mathbb{R}^{d}\right)  }\left\Vert f\right\Vert _{W^{1,2}\left(
\mathbb{R}^{d}\right)  }\left\Vert g\right\Vert _{L^{p}\left(  \mathbb{R}%
^{d}\right)  }.
\]

\textbf{Step 3}. Again%
\begin{align*}
\int_{\mathbb{R}^{d}}f\left(  x\right)  g\left(  x\right)  \partial
_{i}h\left(  x\right)  dx  &  \leq\epsilon\left\Vert h\right\Vert
_{W^{1,2}\left(  \mathbb{R}^{d}\right)  }^{2}+C_{\epsilon}\int_{\mathbb{R}%
^{d}}\left\vert f\right\vert ^{2}\left\vert g\right\vert ^{2}dx.\\
&  \leq\epsilon\left\Vert h\right\Vert _{W^{1,2}\left(  \mathbb{R}^{d}\right)
}^{2}+C_{\epsilon}\left\Vert f\right\Vert _{W^{1,2}\left(  \mathbb{R}%
^{d}\right)  }^{\frac{2d}{p}}\left(  C\left\Vert g\right\Vert _{L^{p}\left(
\mathbb{R}^{d}\right)  }^{2}\left\Vert f\right\Vert _{L^{2}\left(
\mathbb{R}^{d}\right)  }^{2-\frac{2d}{p}}\right)  .
\end{align*}
By Young inequality $ab\leq\delta a^{r}+C_{\delta}b^{r^{\prime}}$, $\frac
{1}{r}+\frac{1}{r^{\prime}}=1$, we have%
\begin{align*}
&  \left\Vert f\right\Vert _{W^{1,2}\left(  \mathbb{R}^{d}\right)  }%
^{\frac{2d}{p}}\left(  \left\Vert g\right\Vert _{L^{p}\left(  \mathbb{R}%
^{d}\right)  }^{2}\left\Vert f\right\Vert _{L^{2}\left(  \mathbb{R}%
^{d}\right)  }^{2-\frac{2d}{p}}\right) \\
&  \leq\delta\left\Vert f\right\Vert _{W^{1,2}\left(  \mathbb{R}^{d}\right)
}^{2}+C_{\delta}\left\Vert g\right\Vert _{L^{p}\left(  \mathbb{R}^{d}\right)
}^{\frac{2p}{p-d}}\left\Vert f\right\Vert _{L^{2}\left(  \mathbb{R}%
^{d}\right)  }^{2}%
\end{align*}
where was used with $r=\frac{p}{d}$, hence $r^{\prime}=\frac{p}{p-d}$. By a
proper choice of $\epsilon$ and $\delta$, we get the last inequality of the lemma.
\end{proof}

\section{Existence\label{section existence}}

We divide the proof into three steps. First, taking a regular approximation
the vector field $v$ and the initial condition $B_{0}$, we prove that the
problem (\ref{Adve}) admits a regular solution $B^{\varepsilon}$. Then, in the
second step, we prove some energy estimates on the regularized problem
independently of $\epsilon$. Finally, by weak compactness, we extract a
subsequence which converges weakly to a solution; weak convergence is
sufficient since the equation is linear, and $v$ has been regularized in the
strong topology. The estimates on the regularized problem are sufficient for
the convergence in the weak formulation of both the stochastic advection
equation and the parabolic equation for the expected values.

Assume $v$ and $B_{0}$\ smooth and let $B$ be a smooth solution (it exists by
stochastic flows, see the next remarks). In this section we prove a priori
estimates which depend only on the norms $\left\Vert v\right\Vert _{L^{\infty
}\left(  0,T;L^{p}\left(  \mathbb{R}^{d}\right)  \right)  }^{2}$ and
$\left\Vert B_{0}\right\Vert _{L^{4}}$. By a classical procedure we shall
deduce the existence of a quasi-regular weak solution: given our non-smooth
data $v$ and $B_{0}$, taken a family of standard symmetric mollifiers
$\{\rho_{\varepsilon}\}_{\varepsilon}$, we define the family of regularized
coefficients $v^{\epsilon}(t,x)=[v(t,\cdot)\ast\rho_{\varepsilon}(\cdot)](x)$
and initial conditions $B_{0}^{\varepsilon}(x)=[B_{0}(\cdot)\ast
\rho_{\varepsilon}(\cdot)](x)$; we prove the estimates for the corresponding
solutions $B^{\varepsilon}(t,x)$ and extract a subsequence which converges
weakly;\ then we pass to the limit just by weak convergence since the equation
is linear and the coefficients $v^{\epsilon}(t,x)$ converge strongly.

Let us remark about the existence of a smooth solution $B^{\varepsilon}(t,x)$.
By the classical results of \cite{Ku},\ the equation
\[
dX_{t}^{\varepsilon}=v^{\varepsilon}(t,X_{t}^{\varepsilon})\,dt+\sum_{k}%
\sigma_{k}\left(  X_{t}^{\varepsilon}\right)  \circ dW_{t}^{k}\,,\hspace
{1cm}X_{0}=x
\]
generates a stochastic flow of smooth diffeomorphisms $\Phi_{t}^{\varepsilon
}\left(  x\right)  $, with inverse $\Psi_{t}^{\varepsilon}\left(  x\right)  $.
Then formula
\begin{equation}
B^{\varepsilon}(t,x)=(D\Phi_{t}^{\varepsilon})(\Psi_{t}^{\varepsilon}\left(
x\right)  )B_{0}^{\varepsilon}\left(  \Psi_{t}^{\varepsilon}\left(  x\right)
\right)  \label{repr formula}%
\end{equation}
gives us a smooth solution.

\subsection{Estimates on $B^{\varepsilon}$ in $L^{2}\left(  \left[
0,T\right]  \times\Omega,L_{loc}^{2}(\mathbb{R}^{d})\right)  $}

In the first part of this section we denote $B^{\varepsilon}$ and
$v^{\varepsilon}$ by $B$ and $v$, to simplify notations. The constants in the
estimates are independent of $\varepsilon$. The strategy used in this section
is inspired by \cite{Beck}.

We formally work on the Stratonovich formulation but the same computations can
be done in a rigorous, although blind, way on the It\^{o} form. Thus we start,
componentwise, from
\[
dB^{\alpha}+\left[  v,B\right]  ^{\alpha}\ dt+\sum_{k=1}^{\infty}\left[
\sigma_{k},B\right]  ^{\alpha}\circ dW_{t}^{k}=0.
\]
We have, for any $\alpha,\beta=1,...,d$,%
\[
d\left(  B^{\alpha}B^{\beta}\right)  +\left[  v,B\right]  ^{\alpha}B^{\beta
}\ dt+\sum_{k=1}^{\infty}\left[  \sigma_{k},B\right]  ^{\alpha}B^{\beta}\circ
dW_{t}^{k}%
\]%
\[
+B^{\alpha}\left[  v,B\right]  ^{\beta}\ dt+\sum_{k=1}^{\infty}B^{\alpha
}\left[  \sigma_{k},B\right]  ^{\beta}\circ dW_{t}^{k}=0.
\]
Notice that%
\begin{align*}
&  \left[  A,B\right]  ^{\alpha}B^{\beta}+\left[  A,B\right]  ^{\beta
}B^{\alpha}\\
&  =\left(  A\cdot\nabla B^{\alpha}\right)  B^{\beta}-\left(  B\cdot\nabla
A^{\alpha}\right)  B^{\beta}+\left(  A\cdot\nabla B^{\beta}\right)  B^{\alpha
}-\left(  B\cdot\nabla A^{\beta}\right)  B^{\alpha}\\
&  =\left(  A\cdot\nabla\right)  \left(  B^{\alpha}B^{\beta}\right)  -\left(
B\cdot\nabla A^{\alpha}\right)  B^{\beta}-\left(  B\cdot\nabla A^{\beta
}\right)  B^{\alpha}\\
&  =\left(  A\cdot\nabla\right)  \left(  B^{\alpha}B^{\beta}\right)  -\left(
B^{\beta}B\cdot\nabla\right)  A^{\alpha}-\left(  B^{\alpha}B\cdot
\nabla\right)  A^{\beta}%
\end{align*}
hence%
\[
d\left(  B^{\alpha}B^{\beta}\right)  +\left(  v\cdot\nabla\right)  \left(
B^{\alpha}B^{\beta}\right)  \ dt+\sum_{k=1}^{\infty}\left(  \sigma_{k}%
\cdot\nabla\right)  \left(  B^{\alpha}B^{\beta}\right)  \circ dW_{t}^{k}%
\]%
\[
-\left(  \left(  B^{\beta}B\cdot\nabla\right)  v^{\alpha}+\left(  B^{\alpha
}B\cdot\nabla\right)  v^{\beta}\right)  \ dt-\sum_{k=1}^{\infty}\left(
\left(  B^{\beta}B\cdot\nabla\right)  \sigma_{k}^{\alpha}+\left(  B^{\alpha
}B\cdot\nabla\right)  \sigma_{k}^{\beta}\right)  \circ dW_{t}^{k}=0.
\]
We go now to It\^{o} form; the corrections are, on the LHS:%
\begin{align*}
\frac{1}{2}\sum_{k=1}^{\infty}d\left\langle \left(  \sigma_{k}\cdot
\nabla\right)  \left(  B^{\alpha}B^{\beta}\right)  ,W^{k}\right\rangle _{t} &
=\frac{1}{2}\sum_{k=1}^{\infty}\left(  \sigma_{k}\cdot\nabla\right)
d\left\langle B^{\alpha}B^{\beta},W^{k}\right\rangle _{t}\\
&  =-\frac{1}{2}\sum_{k=1}^{\infty}\left(  \sigma_{k}\cdot\nabla\right)
\left(  \sigma_{k}\cdot\nabla\right)  \left(  B^{\alpha}B^{\beta}\right)  dt\\
&  +\frac{1}{2}\sum_{k=1}^{\infty}\left(  \sigma_{k}\cdot\nabla\right)
\left(  \left(  B^{\beta}B\cdot\nabla\right)  \sigma_{k}^{\alpha}+\left(
B^{\alpha}B\cdot\nabla\right)  \sigma_{k}^{\beta}\right)  dt
\end{align*}
and%
\begin{align*}
&  -\frac{1}{2}\sum_{k=1}^{\infty}d\left\langle \left(  B^{\beta}B\cdot
\nabla\right)  \sigma_{k}^{\alpha}+\left(  B^{\alpha}B\cdot\nabla\right)
\sigma_{k}^{\beta},W^{k}\right\rangle _{t}\\
&  =-\frac{1}{2}\sum_{k=1}^{\infty}\sum_{i=1}^{d}d\left\langle B^{\beta}%
B^{i},W^{k}\right\rangle _{t}\partial_{i}\sigma_{k}^{\alpha}-\frac{1}{2}%
\sum_{k=1}^{\infty}\sum_{i=1}^{d}d\left\langle B^{\alpha}B^{i},W^{k}%
\right\rangle _{t}\partial_{i}\sigma_{k}^{\beta}\\
&  =\frac{1}{2}\sum_{k=1}^{\infty}\sum_{i=1}^{d}\left(  \sigma_{k}\cdot
\nabla\right)  \left(  B^{\beta}B^{i}\right)  \partial_{i}\sigma_{k}^{\alpha
}dt-\frac{1}{2}\sum_{k=1}^{\infty}\sum_{i=1}^{d}\left(  \left(  B^{\beta
}B\cdot\nabla\right)  \sigma_{k}^{i}+\left(  B^{i}B\cdot\nabla\right)
\sigma_{k}^{\beta}\right)  \partial_{i}\sigma_{k}^{\alpha}dt\\
&  +\frac{1}{2}\sum_{k=1}^{\infty}\sum_{i=1}^{d}\left(  \sigma_{k}\cdot
\nabla\right)  \left(  B^{\alpha}B^{i}\right)  \partial_{i}\sigma_{k}^{\beta
}dt-\frac{1}{2}\sum_{k=1}^{\infty}\sum_{i=1}^{d}\left(  \left(  B^{i}%
B\cdot\nabla\right)  \sigma_{k}^{\alpha}+\left(  B^{\alpha}B\cdot
\nabla\right)  \sigma_{k}^{i}\right)  \partial_{i}\sigma_{k}^{\beta}dt.
\end{align*}
Let us properly summarize the large amount of terms of the
It\^{o}-Stratonovich corrections. We have the identity%
\begin{align*}
& -\frac{1}{2}\sum_{k=1}^{\infty}\left(  \sigma_{k}\cdot\nabla\right)  \left(
\sigma_{k}\cdot\nabla\right)  \left(  B^{\alpha}B^{\beta}\right)  \\
& =-\frac{1}{2}\sum_{k=1}^{\infty}\sum_{i,j=1}^{d}\sigma_{k}^{i}\sigma_{k}%
^{j}\partial_{i}\partial_{j}\left(  B^{\alpha}B^{\beta}\right)  -\frac{1}%
{2}\sum_{k=1}^{\infty}\sum_{i=1}^{d}\sum_{j=1}^{d}\sigma_{k}^{i}\partial
_{i}\sigma_{k}^{j}\partial_{j}\left(  B^{\alpha}B^{\beta}\right)  \\
& =-\frac{1}{2}\sum_{i,j=1}^{d}Q^{ij}\left(  x,x\right)  \partial_{i}%
\partial_{j}\left(  B^{\alpha}B^{\beta}\right)  +\theta\cdot\nabla\left(
B^{\alpha}B^{\beta}\right)
\end{align*}
where
\[
\theta^{j}=-\frac{1}{2}\sum_{k=1}^{\infty}\sigma_{k}\cdot\nabla\sigma_{k}^{j}.
\]
Then, with the additional notations%
\begin{align*}
\theta_{i}^{\alpha}  & =-\sum_{k=1}^{\infty}\sigma_{k}\partial_{i}\sigma
_{k}^{\alpha}\\
\eta_{i}^{\alpha}  & =\frac{1}{2}\sum_{k=1}^{\infty}\left(  \sigma_{k}%
\cdot\nabla\right)  \partial_{i}\sigma_{k}^{\alpha}+\frac{1}{2}\sum
_{k=1}^{\infty}\sum_{j=1}^{d}\partial_{i}\sigma_{k}^{j}\partial_{j}\sigma
_{k}^{\alpha}\\
\eta_{i,j}^{\alpha,\beta}  & =\sum_{k=1}^{\infty}\partial_{j}\sigma_{k}%
^{\beta}\partial_{i}\sigma_{k}^{\alpha}%
\end{align*}
the sum of all correction terms can be written in the form%
\begin{align*}
&  -\frac{1}{2}\sum_{i,j=1}^{d}Q^{ij}\left(  x,x\right)  \partial_{i}%
\partial_{j}\left(  B^{\alpha}B^{\beta}\right)  dt+\theta\cdot\nabla\left(
B^{\alpha}B^{\beta}\right)  dt\\
&  -\sum_{i=1}^{d}\theta_{i}^{\alpha}\cdot\nabla\left(  B^{\beta}B^{i}\right)
dt-\sum_{i=1}^{d}\theta_{i}^{\beta}\nabla\left(  B^{\alpha}B^{i}\right)  dt\\
&  -\sum_{i=1}^{d}\eta_{i}^{\alpha}B^{\beta}B^{i}dt-\sum_{i=1}^{d}\eta
_{i}^{\beta}B^{\alpha}B^{i}dt-\sum_{i,j=1}^{d}\eta_{i,j}^{\alpha,\beta}%
B^{i}B^{j}dt
\end{align*}

Summarizing, and moving all the correction terms on the RHS, we get%
\begin{align*}
&  d\left(  B^{\alpha}B^{\beta}\right)  +\left(  v\cdot\nabla\right)  \left(
B^{\alpha}B^{\beta}\right)  \ dt+\sum_{k=1}^{\infty}\left(  \sigma_{k}%
\cdot\nabla\right)  \left(  B^{\alpha}B^{\beta}\right)  dW_{t}^{k}\\
&  -\left(  \left(  B^{\beta}B\cdot\nabla\right)  v^{\alpha}+\left(
B^{\alpha}B\cdot\nabla\right)  v^{\beta}\right)  \ dt-\sum_{k=1}^{\infty
}\left(  \left(  B^{\beta}B\cdot\nabla\right)  \sigma_{k}^{\alpha}+\left(
B^{\alpha}B\cdot\nabla\right)  \sigma_{k}^{\beta}\right)  dW_{t}^{k}%
\end{align*}%
\begin{align*}
&  =\frac{1}{2}\sum_{i,j=1}^{d}Q^{ij}\left(  x,x\right)  \partial_{i}%
\partial_{j}\left(  B^{\alpha}B^{\beta}\right)  dt+\theta\cdot\nabla\left(
B^{\alpha}B^{\beta}\right)  dt\\
&  -\sum_{i=1}^{d}\theta_{i}^{\alpha}\cdot\nabla\left(  B^{\beta}B^{i}\right)
dt-\sum_{i=1}^{d}\theta_{i}^{\beta}\nabla\left(  B^{\alpha}B^{i}\right)  dt\\
&  -\sum_{i=1}^{d}\eta_{i}^{\alpha}B^{\beta}B^{i}dt-\sum_{i=1}^{d}\eta
_{i}^{\beta}B^{\alpha}B^{i}dt-\sum_{i,j=1}^{d}\eta_{i,j}^{\alpha,\beta}%
B^{i}B^{j}dt.
\end{align*}
Then, taking expectation, we deduce%
\[
\frac{\partial}{\partial t}\mathbb{E}\left[  B^{\alpha}B^{\beta}\right]
+\left(  v\cdot\nabla\right)  \mathbb{E}\left[  B^{\alpha}B^{\beta}\right]
-\left(  \mathbb{E}\left[  B^{\beta}B\right]  \cdot\nabla\right)  v^{\alpha
}-\left(  \mathbb{E}\left[  B^{\alpha}B\right]  \cdot\nabla\right)  v^{\beta}%
\]%
\begin{align*}
&  =\frac{1}{2}\sum_{i,j=1}^{d}Q^{ij}\left(  x,x\right)  \partial_{i}%
\partial_{j}\mathbb{E}\left[  B^{\alpha}B^{\beta}\right]  +\theta\cdot
\nabla\mathbb{E}\left[  B^{\alpha}B^{\beta}\right]  \\
&  +\sum_{i=1}^{d}\theta_{i}^{\alpha}\cdot\nabla\mathbb{E}\left[  B^{\beta
}B^{i}\right]  +\sum_{i=1}^{d}\theta_{i}^{\beta}\nabla\mathbb{E}\left[
B^{\alpha}B^{i}\right]  \\
&  +\sum_{i=1}^{d}\eta_{j}^{\alpha}\mathbb{E}\left[  B^{\beta}B^{i}\right]
+\sum_{i=1}^{d}\eta_{j}^{\beta}\mathbb{E}\left[  B^{\alpha}B^{i}\right]
+\sum_{i,j=1}^{d}\eta_{i,j}^{\alpha,\beta}\mathbb{E}\left[  B^{i}B^{j}\right]
.
\end{align*}
Define $u_{\alpha\beta}\left(  t,x\right)  :=\mathbb{E}\left[  B^{\alpha
}\left(  t,x\right)  B^{\beta}\left(  t,x\right)  \right]  $. We have the
system of parabolic equations%
\begin{align*}
&  \frac{\partial u_{\alpha\beta}}{\partial t}+\left(  v\cdot\nabla\right)
u_{\alpha\beta}-\sum_{i=1}^{d}u_{\beta i}\partial_{i}v^{\alpha}-\sum_{i=1}%
^{d}u_{\alpha i}\partial_{i}v^{\beta}\\
&  =\frac{1}{2}\sum_{i,j=1}^{d}Q^{ij}\left(  x,x\right)  \partial_{i}%
\partial_{j}u_{\alpha\beta}+M^{\alpha\beta}\left(  u\right)
\end{align*}
where $u$ is the matrix-function $\left(  u_{ij}\right)  _{ij}$ and
$M^{\alpha\beta}\left(  u\right)  $ is a first order differential operator
with bounded continuous coefficients.

Then%
\begin{align*}
&  \frac{1}{2}\frac{d}{dt}\int u_{\alpha\beta}^{2}\left(  t,x\right)
dx-\frac{1}{2}\sum_{i,j=1}^{d}\int Q^{ij}\left(  x,x\right)  \partial
_{i}\partial_{j}u_{\alpha\beta}u_{\alpha\beta}dx\\
&  =\sum_{i=1}^{d}\int u_{\alpha\beta}u_{\beta i}\partial_{i}v^{\alpha}%
dx+\sum_{i=1}^{d}\int u_{\alpha\beta}u_{\alpha i}\partial_{i}v^{\beta}dx+\int
M^{\alpha\beta}\left(  u\right)  u_{\alpha\beta}dx.
\end{align*}
Similarly to Proposition \ref{Prop ellipticity} we have%
\[
-\frac{1}{2}\sum_{i,j=1}^{d}\int Q^{ij}\left(  x,x\right)  \partial
_{i}\partial_{j}u_{\alpha\beta}u_{\alpha\beta}dx\geq\frac{\nu}{2}%
\int\left\vert \nabla u_{\alpha\beta}\right\vert ^{2}dx-C\int\left\vert
u_{\alpha\beta}\right\vert ^{2}dx
\]
for some constants $\nu>0$, $C\geq0$, and similarly, for every $\epsilon>0$,%
\[
\int M^{\alpha\beta}\left(  u\right)  u_{\alpha\beta}dx\leq\epsilon
\sum_{\alpha,\beta=1}^{d}\int\left\vert \nabla u_{\alpha\beta}\right\vert
^{2}dx+C_{\epsilon}\sum_{\alpha,\beta=1}^{d}\int u_{\alpha\beta}^{2}dx
\]
hence%
\begin{align*}
&  \frac{1}{2}\frac{d}{dt}\int u_{\alpha\beta}^{2}\left(  t,x\right)
dx+\frac{\nu}{2}\int\left\vert \nabla u_{\alpha\beta}\right\vert
^{2}dx-\epsilon\sum_{\alpha,\beta=1}^{d}\int\left\vert \nabla u_{\alpha\beta
}\right\vert ^{2}dx\\
&  \leq C_{\epsilon}\sum_{\alpha,\beta=1}^{d}\int u_{\alpha\beta}^{2}%
dx+\sum_{i=1}^{d}\int u_{\alpha\beta}u_{\beta i}\partial_{i}v^{\alpha}%
dx+\sum_{i=1}^{d}\int u_{\alpha\beta}u_{\alpha i}\partial_{i}v^{\beta}dx.
\end{align*}
We deduce, with a proper choice of $\epsilon>0$,
\begin{align*}
&  \frac{1}{2}\frac{d}{dt}\sum_{\alpha,\beta=1}^{d}\int u_{\alpha\beta}%
^{2}dx+\frac{\nu}{4}\sum_{\alpha,\beta=1}^{d}\int\left\vert \nabla
u_{\alpha\beta}\right\vert ^{2}dx\\
&  \leq C\sum_{\alpha,\beta=1}^{d}\int u_{\alpha\beta}^{2}dx+\sum
_{i,\alpha,\beta=1}^{d}\int u_{\alpha\beta}u_{\beta i}\partial_{i}v^{\alpha
}dx+\sum_{i=1}^{d}\int u_{\alpha\beta}u_{\alpha i}\partial_{i}v^{\beta}dx.
\end{align*}
Let us treat only the term $\int u_{\alpha\beta}u_{\beta i}\partial
_{i}v^{\alpha}dx$, the others being equal. We have%
\[
\int u_{\alpha\beta}u_{\beta i}\partial_{i}v^{\alpha}dx=-\int v^{\alpha
}\partial_{i}\left(  u_{\alpha\beta}u_{\beta i}\right)  dx.
\]
From Lemma \ref{lemma interp} (for instance with $f=u_{\alpha\beta}$,
$g=v^{\alpha}$, $h=u_{\beta i}$), we deduce
\[
\int u_{\alpha\beta}u_{\beta i}\partial_{i}v^{\alpha}dx\leq\epsilon\left\Vert
u\right\Vert _{W^{1,2}\left(  \mathbb{R}^{d}\right)  }^{2}+C_{\epsilon
}\left\Vert v\right\Vert _{L^{p}\left(  \mathbb{R}^{d}\right)  }^{\frac
{2p}{p-d}}\left\Vert u\right\Vert _{L^{2}\left(  \mathbb{R}^{d}\right)  }%
^{2}.
\]
Choosing $\epsilon>0$ small enough, we get%
\[
\frac{1}{2}\frac{d}{dt}\sum_{\alpha,\beta=1}^{d}\int u_{\alpha\beta}%
^{2}dx+\frac{\nu}{8}\sum_{\alpha,\beta=1}^{d}\int\left\vert \nabla
u_{\alpha\beta}\right\vert ^{2}dx\leq C\sum_{\alpha,\beta=1}^{d}\int
u_{\alpha\beta}^{2}dx
\]
where $C$ depends also on $\left\Vert v\right\Vert _{L^{\infty}\left(
0,T;L^{p}\left(  \mathbb{R}^{d}\right)  \right)  }$.

For the sake of clarity, let us restore now the notations $B^{\varepsilon}$
and $v^{\varepsilon}$, different from $B$ and $v$. We continue to denote by
$C>0$ a constant which depends only on $\left\Vert v\right\Vert _{L^{\infty
}\left(  0,T;L^{p}\left(  \mathbb{R}^{d}\right)  \right)  }$ and not on
$\varepsilon$. The previous identity, by Gronwall lemma and some other
elementary computations, gives us%
\begin{align*}
\sup_{t\in\left[  0,T\right]  }\left[  \int\left(  \mathbb{E}\left[
\left\vert B^{\varepsilon}\left(  t,x\right)  \right\vert ^{2}\right]
\right)  ^{2}dx\right]   &  \leq C\int\left(  \mathbb{E}\left[  \left\vert
B_{0}^{\varepsilon}\left(  x\right)  \right\vert ^{2}\right]  \right)
^{2}dx\\
&  =C\int\left\vert B_{0}^{\varepsilon}\left(  x\right)  \right\vert
^{4}dx\leq C\int\left\vert B_{0}\left(  x\right)  \right\vert ^{4}dx
\end{align*}
(here we see the need to assume $B_{0}\in L^{4}\left(  \mathbb{R}%
^{d},\mathbb{R}^{d}\right)  $). Therefore, given any $R>0$, we have%
\begin{equation}
\int_{0}^{T}\int_{B_{R}}\mathbb{E}\left[  \left\vert B^{\varepsilon}\left(
t,x\right)  \right\vert ^{2}\right]  dx\leq TR\left(  \int_{0}^{T}\int_{B_{R}%
}\left(  \mathbb{E}\left[  \left\vert B^{\varepsilon}\left(  t,x\right)
\right\vert ^{2}\right]  \right)  ^{2}dx\right)  ^{\frac{1}{2}}\leq CRT
\label{first basic est}%
\end{equation}
where the constant $C>0$ depends only on $\left\Vert v\right\Vert _{L^{\infty
}\left(  0,T;L^{p}\left(  \mathbb{R}^{d}\right)  \right)  }$ and
$\int\left\vert B_{0}\left(  x\right)  \right\vert ^{4}dx$.

\subsection{Estimates on $E\left[  B^{\varepsilon}e_{f}\right]  $}

Since $B^{\varepsilon}$ is regular, by It\^{o} calculus we can prove that
$V^{\varepsilon}(t,x):=\mathbb{E}[B^{\varepsilon}(t,x)e_{f}\left(  t\right)
]$ satisfies equation (\ref{PDE}), namely (we often omit again in the first
part of the section the superscript $\varepsilon$)%
\[
\partial_{t}V+\left(  v+h\right)  \cdot\nabla V=\mathcal{L}V+V\cdot
\nabla\left(  v+h\right)  .
\]
Hence, with $V_{0}^{\varepsilon}(x):=\mathbb{E}[B_{0}^{\varepsilon}%
(x)e_{f}\left(  0\right)  ]=B_{0}^{\varepsilon}(x)$,
\begin{align*}
&  \int_{\mathbb{R}^{d}}\left\vert V(t,x)\right\vert ^{2}\,dx-\int_{0}^{t}%
\int_{\mathbb{R}^{d}}V\cdot\mathcal{L}V\,dxds\\
&  =\int_{\mathbb{R}^{d}}\left\vert V_{0}(x)\right\vert ^{2}\,dx+\sum
_{i,j=1}^{d}\int_{0}^{t}\int_{\mathbb{R}^{d}}V^{i}\partial_{i}\left(
v+h\right)  ^{j}V^{j}\,dxds\\
&  =\int_{\mathbb{R}^{d}}\left\vert B_{0}(x)\right\vert ^{2}\,dx-\sum
_{i,j=1}^{d}\int_{0}^{t}\int_{\mathbb{R}^{d}}\left(  v+h\right)  ^{j}%
\partial_{i}\left(  V^{i}V^{j}\right)  \,dxds.
\end{align*}

We observe that%

\[
\sum_{i,j=1}^{d}\int_{0}^{t}\int_{\mathbb{R}^{d}}h^{j}\partial_{i}\left(
V^{i}V^{j}\right)  \,dxds.
\]

\[
\leq C\int_{0}^{t}|f|\int_{\mathbb{R}^{d}}\left\vert V(t,x)\right\vert
^{2}\,dx\ ds.
\]

From Proposition \ref{Prop ellipticity} and Lemma \ref{lemma interp} (for
instance with $f=V^{i}$, $g=v^{j}$, $h=V^{j}$), we get
\begin{align*}
&  \int_{\mathbb{R}^{d}}V^{2}(t,x)\,dx+\nu\int_{0}^{t}\int_{\mathbb{R}^{d}%
}\left\vert \nabla V(s,x)\right\vert ^{2}\,dxds\\
&  \leq\int_{\mathbb{R}^{d}}V_{0}^{2}(x)\,dx+\int_{0}^{t}(|f|+C)\int%
_{\mathbb{R}^{d}}\left\vert V(t,x)\right\vert ^{2}\,dx\ ds\\
&  +\epsilon\int_{0}^{t}\left\Vert V\right\Vert _{W^{1,2}\left(
\mathbb{R}^{d}\right)  }^{2}ds+C_{\epsilon}\sup_{t\in\left[  0,T\right]
}\left\Vert v\left(  t\right)  \right\Vert _{L^{p}\left(  \mathbb{R}%
^{d}\right)  }^{\frac{2p}{p-d}}\int_{0}^{t}\left\Vert V\right\Vert
_{L^{2}\left(  \mathbb{R}^{d}\right)  }^{2}ds.
\end{align*}
When $\epsilon$ is small enough, by Gronwall lemma and the inequality itself
we deduce (here we restore the superscript $\varepsilon$)%
\begin{equation}
\sup_{t\in\left[  0,T\right]  }\int_{\mathbb{R}^{d}}V^{\varepsilon}%
(t,x)^{2}\,dx+\int_{0}^{T}\left\Vert V^{\varepsilon}\left(  s,\cdot\right)
\right\Vert _{W^{1,2}\left(  \mathbb{R}^{d}\right)  }^{2}ds\leq C \label{esti}%
\end{equation}
where again (given $h$) the constant $C>0$ depends only on $\left\Vert
v\right\Vert _{L^{\infty}\left(  0,T;L^{p}\left(  \mathbb{R}^{d}\right)
\right)  }$ and $\int\left\vert B_{0}\left(  x\right)  \right\vert ^{4}dx$.

\subsection{Passage to the limit}

From the bound (\ref{first basic est}) and a diagonal procedure, we may
construct a sequence $B^{\varepsilon_{n}}$ which converges weakly to a
progressively measurable process $B$ in $L^{2}\left(  \left[  0,T\right]
\times B\left(  0,R\right)  \times\Omega;\mathbb{R}^{d}\right)  $ for every
$R>0$. Since $B^{\varepsilon}$ is a solution of (\ref{Adve}), it is also a
weak solution. The equation is linear and thus, over compact support test
functions, we may pass to the limit by means of the previous weak convergence
property; we apply the classical argument of \cite[Sect. II, Chapter
3]{Pardoux}, see also \cite[Theorem 15]{FGP2}. This proves that property (iii)
in the definition of quasiregular weak solution is satisfied;\ similarly, one
proves (i), using the definition on smooth compact support test functions
$\varphi$
\[
\operatorname{div}B\left(  \omega,t\right)  \left(  \varphi\right)  =-\int
B\left(  \omega,t,x\right)  \cdot\nabla\varphi\left(  x\right)  dx
\]
on test functions $\varphi\in C_{c}^{\infty}(\mathbb{R}^{d},\mathbb{R}^{d})$
and the argument of \cite[Sect. II, Chapter 3]{Pardoux}. Similarly, one checks
that the real valued process $\int B(t,x)\cdot\varphi(x)dx$ is the weak limit
in $L^{2}\left(  \left[  0,T\right]  \times\Omega\right)  $ of the subsequence
$\int B^{\epsilon_{n}}(t,x)\cdot\varphi(x)dx$\ and therefore it is
progressively measurable; from the identity of property (iii)\ it follows that
it is also an $\mathcal{F}_{t}$-semimartingale and it has a continuous modification.

From (\ref{esti}) there exists a subsequence $\varepsilon_{n}$, which can be
extracted from the subsequence used in the previous step, such that
$V^{\varepsilon_{n}}(t,x)$ converges weakly star to the function
$V(t,x)=\mathbb{E}[B(t,x)\,e_{f}]$ in $C([0,T];L^{2}(\mathbb{R}^{d}%
,\mathbb{R}^{d}))$ and such that $\nabla V^{\varepsilon_{n}}(t,x)$ converges
weakly to $\nabla V(t,x)$ in $L^{2}([0,T]\times\mathbb{R}^{d};\mathbb{R}^{d}%
)$. This allows us to conclude that $V\in L^{2}([0,T];H^{1}(\mathbb{R}%
^{d},\mathbb{R}^{d}))\cap C([0,T];L^{2}(\mathbb{R}^{d},\mathbb{R}^{d}))$ and,
again thanks to the linearity of the equations, to show that $V$ solves the
PDE (\ref{PDE}), namely property (iv) in Definition \ref{defisolu}. This
proves that $B$ a quasiregular weak solution.

\subsection{Extra regularity in the case of finite dimensional noise}

Consider the special case when $\sigma_{k}\left(  x\right)  =e_{k}$ for
$k=1,...,d$, where $e_{1},...,e_{d}$ is a basis of $\mathbb{R}^{d}$ and
$\sigma_{k}\left(  x\right)  =0$ for $k\geq4$. The equation of
characteristics, for the regularized field $v^{\varepsilon}$, is simply
\[
dX_{t}^{\epsilon}=v^{\varepsilon}(t,X_{t})\,dt+dW_{t}\,,\hspace{1cm}X_{0}=x\,
\]
where $W_{t}=\left(  W_{t}^{1},...,W_{t}^{d}\right)  $. Recall we have the
representation formula
\begin{equation}
B^{\varepsilon}(t,x)=(D\Phi_{t}^{\varepsilon})(\Psi_{t}^{\varepsilon}\left(
x\right)  )B_{0}^{\varepsilon}\left(  \Psi_{t}^{\varepsilon}\left(  x\right)
\right)  \label{repr formula2}%
\end{equation}
in terms of the (regularized) initial condition and the direct and inverse
flows $\Phi_{t}^{\varepsilon}$ and $\Psi_{t}^{\varepsilon}$ associated to this
equation. By Lemma 5 of \cite{Fre1} we have that for every $p\geq1$, there
exists $C_{p,T}>0$ such that
\begin{equation}
\sup_{t\in\lbrack0,T]}\sup_{x\in\mathbb{R}^{3}}\mathbb{E}[|D\Phi
_{t}^{\varepsilon}\left(  x\right)  |^{p}]\leq C_{p,T},\quad\text{uniformly in
$\epsilon>0$}. \label{estima2}%
\end{equation}
Since $\Phi_{t}^{\varepsilon}(\Psi_{t}^{\varepsilon})=Id_{x}$ we have%

\[
(D\Phi_{t}^{\varepsilon})(\Psi_{t}^{\varepsilon})=\left(  D\Psi_{t}%
^{\varepsilon}\right)  ^{-1}.
\]
We observe that $\left(  D\Psi_{t}^{\varepsilon}\right)  ^{-1}$ is equal to%

\[
\frac{1}{\det(D\Phi_{t}^{\varepsilon})}Cof(D\Phi_{t}^{\varepsilon})^{T}%
\]
where $Cof$ denoted the cofactor matrix of $D\Phi_{t}^{\varepsilon}$. By the
solenoidal hypothesis on $v$ we have%

\[
Det(D\Phi_{t}^{\varepsilon})=1
\]
and by inequality (\ref{estima2}) we deduce that $Cof(D\Phi_{t}^{\varepsilon
})^{T}\in L^{\infty}\left(  [0,T]\times\mathbb{R}^{3},L^{2}(\Omega)\right)  $.
Then, under the assumption that $B_{0}$ is bounded, $B^{\varepsilon}(t,x)$ is
uniformly bounded in $L^{2}\left(  \Omega\times\lbrack0,T]\times\mathbb{R}%
^{3}\right)  $ and in $L^{\infty}\left(  \mathbb{R}^{3}\times\lbrack
0,T],L^{2}(\Omega)\right)  $. Arguing as above on weakly star converging
subsequences, this allows to prove the existence of a quasiregular solution
with the additional property%
\[
B\in L^{\infty}\left(  \mathbb{R}^{3}\times\lbrack0,T],L^{2}(\Omega)\right)
.
\]

\section{Uniqueness\label{sect unique}}

\begin{proof}
\textit{Step 0. Set of solutions.} Remark that the set of quasiregular weak
solutions is a linear subspace of $L^{2}\left(  \Omega\times\lbrack
0,T]\times\mathbb{R}^{3}\right)  $, because the stochastic advection equation
is linear, and the regularity conditions is a linear constraint. Therefore, it
is enough to show that a quasiregular weak solution $B$ with initial condition
$B_{0}=0$ vanishes identically.

\textit{Step 1. $V=0$.} Let $V\left(  t,x\right)  =\mathbb{E}\left[  B\left(
t,x\right)  e_{f}\left(  t\right)  \right]  $, with $f\in L^{2}%
([0,T],\mathbb{R}^{n})\cap L^{\infty}([0,T],\mathbb{R}^{n})$. If we prove that
$V=0$, for arbitrary $f$, by Lemma \ref{lemma Wiener space} we deduce $B=0$.
The function $V$ satisfies
\[
\partial_{t}V+\left[  v+h,V\right]  =\mathcal{L}V
\]
with initial condition $V_{0}=0$, where $h\left(  t,x\right)  :=\sum_{k=1}%
^{n}f_{k}\left(  t\right)  \sigma_{k}\left(  x\right)  $. It is thus
sufficient to prove that a solution $V$ (in weak sense) of class
$L^{2}([0,T];H^{1}(\mathbb{R}^{3}))\cap C([0,T];L^{2}(\mathbb{R}^{3}))$ of
this equation, such that $V_{0}=0$, is identically equal to zero. Let us see
that this is a classical result of the variational theory of evolution equations.

Let $\mathcal{V}\subset\mathcal{H}\subset\mathcal{V}^{\prime}$ be the Gelfand
triple defined by
\begin{align*}
\mathcal{H}  &  =L_{\sigma}^{2}\left(  \mathbb{R}^{3},\mathbb{R}^{3}\right) \\
\mathcal{V}  &  =H_{\sigma}^{1}\left(  \mathbb{R}^{3},\mathbb{R}^{3}\right)
\end{align*}
where the subscript $\sigma$ denotes the fact that we take these vector fields
with divergence equal to zero. The norm $\left\vert .\right\vert
_{\mathcal{H}}$ and scalar product $\left\langle .,.\right\rangle
_{\mathcal{H}}$ are the usual ones, and the norm $\left\Vert .\right\Vert
_{\mathcal{V}}$ in $\mathcal{V}$ is defined by%
\[
\left\Vert f\right\Vert _{\mathcal{V}}^{2}=\sum_{i=1}^{3}\int_{\mathbb{R}^{3}%
}\left\vert \nabla f^{i}\left(  x\right)  \right\vert ^{2}dx+\int%
_{\mathbb{R}^{3}}\left\vert f\left(  x\right)  \right\vert ^{2}dx.
\]
Let $a:\left[  0,T\right]  \times\mathcal{V}\times\mathcal{V}\rightarrow
\mathbb{R}$ be the bilinear form defined on smooth fields $f,g$ as%
\[
a\left(  t,f,g\right)  =-\int_{\mathbb{R}^{3}}\mathcal{L}f\left(  x\right)
\cdot g\left(  x\right)  dx+\int_{\mathbb{R}^{3}}\left[  v+h,f\right]  \left(
x\right)  \cdot g\left(  x\right)  dx
\]
and extended to $\mathcal{V}\times\mathcal{V}$ by one integration by parts of
the second order term in $\mathcal{L}$; moreover, since $v$ is not
differentiable, we have to interpret also one term in $\int_{\mathbb{R}^{3}%
}\left[  v+h,f\right]  \left(  x\right)  \cdot g\left(  x\right)  dx$ by
integration by parts. More precisely,
\begin{align*}
a\left(  t,f,g\right)   &  =\sum_{i,j,\alpha=1}^{3}\int_{\mathbb{R}^{3}}%
a_{ij}\left(  x\right)  \partial_{j}f^{\alpha}\left(  x\right)  \partial
_{i}g^{\alpha}\left(  x\right)  dx+\sum_{i,j,\alpha=1}^{3}\int_{\mathbb{R}%
^{3}}g^{\alpha}\left(  x\right)  \partial_{j}f^{\alpha}\left(  x\right)
\partial_{i}a_{ij}\left(  x\right)  dx\\
&  -\sum_{i,\alpha,\beta=1}^{3}\int_{\mathbb{R}^{3}}b_{i}^{\alpha\beta}\left(
x\right)  \partial_{i}f^{\beta}\left(  x\right)  g^{\alpha}\left(  x\right)
dx-\sum_{\alpha,\beta=1}^{3}\int_{\mathbb{R}^{3}}c^{\alpha\beta}\left(
x\right)  f^{\beta}\left(  x\right)  g^{\alpha}\left(  x\right)  dx\\
&  +\sum_{\alpha,\beta=1}^{3}\int_{\mathbb{R}^{3}}\left(  v^{\alpha}\left(
t,x\right)  +h^{\alpha}\left(  t,x\right)  \right)  \partial_{\alpha}f^{\beta
}\left(  x\right)  g^{\beta}\left(  x\right)  dx\\
&  +\sum_{\alpha,\beta=1}^{3}\int_{\mathbb{R}^{3}}\left(  v^{\beta}\left(
t,x\right)  +h^{\beta}\left(  t,x\right)  \right)  \partial_{\alpha}\left(
f^{\alpha}\left(  x\right)  g^{\beta}\left(  x\right)  \right)  dx
\end{align*}
where we recall that $\partial_{i}a_{ij}$ is bounded continuous. Then the weak
form of equation $\partial_{t}V+\left[  v+h,V\right]  =\mathcal{L}V$, with
$V_{0}=0$, is equivalent to
\begin{equation}
\left\langle V\left(  t\right)  ,\phi\right\rangle _{\mathcal{H}}+\int_{0}%
^{t}a\left(  s,V\left(  s\right)  ,\phi\right)  ds=0. \label{abstract PDE}%
\end{equation}
for all $\phi\in\mathcal{V}$. Uniqueness for equations (\ref{PDE}) and
(\ref{abstract PDE}) are equivalent, in the class $V\in L^{2}\left(
0,T;\mathcal{V}\right)  \cap C\left(  \left[  0,T\right]  ;\mathcal{H}\right)
$. It is known, see \cite{Lions}, that uniqueness (and existence) in this
class holds when $a$ is measurable in the three variables, continuous and
coercive in the last two variables, namely%
\begin{equation}
\left\vert a\left(  t,f,g\right)  \right\vert \leq C\left\Vert f\right\Vert
_{\mathcal{V}}\left\Vert g\right\Vert _{\mathcal{V}} \label{continuity}%
\end{equation}%
\begin{equation}
a\left(  t,f,f\right)  \geq\nu\left\Vert f\right\Vert _{\mathcal{V}}%
^{2}-\lambda\left\vert f\right\vert _{\mathcal{H}}^{2} \label{coercivity}%
\end{equation}
for some constants $C,\lambda\geq0$, $\nu>0$, for a.e. $t$ and all
$f,g\in\mathcal{V}$. Let us prove these two properties. It is sufficient to
check them on the subset of smooth compact support divergence free fields
$f,g$.

Let us prove (\ref{continuity}). The first four terms in the explicit
expression for $a\left(  t,f,g\right)  $ can be bounded above by $C\left\Vert
f\right\Vert _{\mathcal{V}}\left\Vert g\right\Vert _{\mathcal{V}}$ because
$a_{ij},\partial_{i}a_{ij},b_{i}^{\alpha\beta},c^{\alpha\beta}$ are bounded.
The difficult terms are the last two. Again, since $h$ is bounded, the terms%
\[
\sum_{\alpha,\beta=1}^{3}\int_{\mathbb{R}^{3}}h^{\alpha}\left(  t,x\right)
\partial_{\alpha}f^{\beta}\left(  x\right)  g^{\beta}\left(  x\right)
dx+\sum_{\alpha,\beta=1}^{3}\int_{\mathbb{R}^{3}}h^{\beta}\left(  t,x\right)
\partial_{\alpha}\left(  f^{\alpha}\left(  x\right)  g^{\beta}\left(
x\right)  \right)  dx
\]
can be bounded above by $C\left\Vert f\right\Vert _{\mathcal{V}}\left\Vert
g\right\Vert _{\mathcal{V}}$. It remains to bound%
\[
\sum_{\alpha,\beta=1}^{3}\int_{\mathbb{R}^{3}}v^{\alpha}\left(  t,x\right)
\partial_{\alpha}f^{\beta}\left(  x\right)  g^{\beta}\left(  x\right)
dx+\sum_{\alpha,\beta=1}^{3}\int_{\mathbb{R}^{3}}v^{\beta}\left(  t,x\right)
\partial_{\alpha}\left(  f^{\alpha}\left(  x\right)  g^{\beta}\left(
x\right)  \right)  dx.
\]
But here we use repeatedly the first claim of Lemma Lemma \ref{lemma interp}
and bound also these terms with $C\left\Vert f\right\Vert _{\mathcal{V}%
}\left\Vert g\right\Vert _{\mathcal{V}}$. We have proved (\ref{continuity}).

Finally, let us show property (\ref{coercivity}). From Proposition
\ref{Prop ellipticity}, the part of $a\left(  t,f,f\right)  $ related to
$-\int_{\mathbb{R}^{3}}\mathcal{L}f\left(  x\right)  \cdot f\left(  x\right)
dx$ is bounded below by%

\[
\nu\int_{\mathbb{R}^{3}}\left\vert \nabla f\left(  x\right)  \right\vert
^{2}dx-C\int_{\mathbb{R}^{3}}\left\vert f\left(  x\right)  \right\vert
^{2}dx.
\]
The remaining terms, namely
\begin{align}
&  \sum_{\alpha,\beta=1}^{3}\int_{\mathbb{R}^{3}}\left(  v^{\alpha}\left(
t,x\right)  +h^{\alpha}\left(  t,x\right)  \right)  \partial_{\alpha}f^{\beta
}\left(  x\right)  f^{\beta}\left(  x\right)  dx\label{to be bounded}\\
&  +\sum_{\alpha,\beta=1}^{3}\int_{\mathbb{R}^{3}}\left(  v^{\beta}\left(
t,x\right)  +h^{\beta}\left(  t,x\right)  \right)  \partial_{\alpha}\left(
f^{\alpha}\left(  x\right)  f^{\beta}\left(  x\right)  \right)  dx
\end{align}
are bounded above in absolute value by
\[
\frac{\nu}{2}\int_{\mathbb{R}^{3}}\left\vert \nabla f\left(  x\right)
\right\vert ^{2}dx+C\int_{\mathbb{R}^{3}}\left\vert f\left(  x\right)
\right\vert ^{2}dx
\]
because of Lemma \ref{lemma interp}, with a suitable choice of $\epsilon>0$.
This implies $a\left(  t,f,f\right)  \geq\frac{\nu}{2}\left\Vert f\right\Vert
_{\mathcal{V}}^{2}-C\left\vert f\right\vert _{\mathcal{H}}^{2}$.

\textit{Step 2. Conclusion. Until now we have proved that, for every }$f\in
L^{2}([0,T],\mathbb{R}^{n})\cap L^{\infty}([0,T],\mathbb{R}^{n})$, the
function $\left(  t,x\right)  \mapsto\mathbb{E}[B(t,x)e_{f}\left(  t\right)
]$ is the zero element of the space $L^{2}\left(  0,T;\mathcal{V}\right)  \cap
C\left(  \left[  0,T\right]  ;\mathcal{H}\right)  $. We have to deduce that
$B=0$.

Being $\left(  t,x\right)  \mapsto\mathbb{E}[B(t,x)e_{f}\left(  t\right)  ]$
the zero element of $C\left(  \left[  0,T\right]  ;\mathcal{H}\right)  $, we
know that for every $t\in\left[  0,T\right]  $ we have%
\[
\int_{\mathbb{R}^{3}}\mathbb{E}[B(t,x)e_{f}\left(  t\right)  ]g(x)\,dx=0
\]
for all $g\in C_{c}^{\infty}(\mathbb{R}^{3},\mathbb{R}^{3})$; and this holds
true for all $e_{f}\in\mathcal{D}$. By linearity of the integral and the
expected value we also have that
\begin{equation}
\int_{\mathbb{R}^{3}}\mathbb{E}\left[  B(t,x)\,Y\right]  g(x)\,dx=0
\label{eq uY}%
\end{equation}
for every random variable $Y$ which can be written as a linear combination of
a finite number of $e_{f}\left(  t\right)  $ and by density also the
restriction $f\in L^{\infty}([0,T],\mathbb{R}^{n})$ can be removed. Since by
Lemma \ref{lemma Wiener space} the span generated by $e_{f}\left(  t\right)  $
is dense in $L^{2}(\Omega,\overline{\mathcal{G}}_{t})$, \eqref{eq uY} holds
for any $Y\in L^{2}(\Omega,\overline{\mathcal{G}}_{t})$. Namely, we have%
\[
\mathbb{E}\left[  \int_{\mathbb{R}^{3}}B(t,x)g(x)dx\,Y\right]  =0
\]
for every $Y\in L^{2}(\Omega,\overline{\mathcal{G}}_{t})$. Since, by
assumption, $\int_{\mathbb{R}^{3}}B(t,x)g(x)dx$ is $\overline{\mathcal{G}}%
_{t}$-measurable, we deduce
\[
\int_{\mathbb{R}^{3}}B(t,x)g(x)dx=0.
\]
This holds true for every $g\in C_{c}^{\infty}(\mathbb{R}^{3},\mathbb{R}^{3}%
)$, hence $B(t,\cdot)=0$.
\end{proof}

\section*{Acknowledgements}


The authors express their gratitude to anonymous referees that helped to
improve considerably the structure of the work and the exposition. The work of
F. F. is supported in part by University of Pisa under the Project
PRA\_2016\_41. The work of C. O. is partially supported by FAPESP 2015/04723-2
and CNPq through the grant 460713/2014-0. This paper was written while the
second author visited the University of Pisa with the partial support of
GNAMPA, INDAM.


\end{document}